\documentclass[12pt,reqno,twoside]{amsart}
\usepackage{graphicx}
\usepackage{amssymb}
\usepackage{epstopdf}
\usepackage[asymmetric,top=3.5cm,bottom=4.3cm,left=3.1cm,right=3.1cm]{geometry}
\geometry{a4paper}

\usepackage{booktabs} 
\usepackage{array} 
\usepackage{paralist} 
\usepackage{verbatim} 
\usepackage{subfig} 
\usepackage{tabularx}
\usepackage{amsmath,amsfonts,amsthm,mathrsfs,amssymb,cite}
\usepackage[usenames]{color}
\usepackage{bm}

\newtheorem{thm}{Theorem}[section]

\newtheorem{lem}{Lemma}[section]

\theoremstyle{definition}

\theoremstyle{remark}

\newtheorem{rem}{Remark}[section]
\numberwithin{equation}{section}

\newcommand{\bu}{\mathbf{u}}
\newcommand{\bv}{\mathbf{v}}

\newcommand{\bff}{\mathbf{f}}


\newcommand{\norm}[1]{\left\Vert#1\right\Vert}
\newcommand{\bh}{\mathbf{h}}
\newcommand{\bg}{\mathbf{g}}
\newcommand{\bvarphi}{\bm{\varphi}}
\newcommand{\bpsi}{\bm{\psi}}
\newcommand{\bnu}{\bm{\nu}}

\newcommand{\bx}{\mathbf{x}}
\newcommand{\by}{\mathbf{y}}
\newcommand{\bM}{\mathbf{M}}
\newcommand{\mch}{\mathcal{H}}
\newcommand{\bkappa}{\bm{\kappa}}
\newcommand{\bupsilon}{\bm{\upsilon}}
\newcommand{\bphi}{\bm{\phi}}

\newcommand{\Scal}{\mathcal{S}}
\newcommand{\Kcal}{\mathcal{K}}
\newcommand{\Pcal}{\mathcal{P}}
\newcommand{\Qcal}{\mathcal{Q}}

\newcommand{\Acal}{\mathcal{A}}
\newcommand{\Tcal}{\mathcal{T}}

\newcommand{\la}{\langle}
\newcommand{\ra}{\rangle}



\title[Novel Plasmonic Structures in Elasticity]{On Novel Elastic Structures Inducing Plasmonic Resonances With Finite Frequencies and Cloaking Due to Anomalous Localized Resonances}

\author{Hongjie Li}
\address{Department of Mathematics, Hong Kong Baptist University, Kowloon Tong, Hong Kong SAR.}
\email{hongjie$_{-}$li@yeah.net}

\author{Jinhong Li}
\address{School of Science, Qilu University of Technology, Jinan, Shandong 250353, P. R. China.}
\email{lijinhong@qlu.edu.cn}

\author{Hongyu Liu}
\address{Department of Mathematics, Hong Kong Baptist University, Kowloon Tong, Hong Kong SAR.\vspace*{-4mm}}
\address{\vspace*{-4mm}and}
\address{HKBU Institute of Research and Continuing Education, Virtual University Park, Shenzhen, P. R. China.}
\email{hongyu.liuip@gmail.com; hongyuliu@hkbu.edu.hk}

\begin{document}
\maketitle

\begin{abstract}

This paper is concerned with the theoretical study of plasmonic resonances for linear elasticity governed by the Lam\'e system in $\mathbb{R}^3$, and their application for cloaking due to anomalous localized resonances. We derive a very general and novel class of elastic structures that can induce plasmonic resonances. It is shown that if either one of the two convexity conditions on the Lam\'e parameters is broken, then we can construct certain plasmon structures that induce resonances. This significantly extends the relevant existing studies in the literature where the violation of both convexity conditions is required. Indeed, the existing plasmonic structures are a particular case of the general structures constructed in our study. Furthermore, we consider the plasmonic resonances within the finite frequency regime, and rigorously verify the quasi-static approximation for diametrically small plasmonic inclusions. Finally, as an application of the newly found structures, we construct a plasmonic device of the core-shell-matrix form that can induce cloaking due to anomalous localized resonance in the quasi-static regime, which also includes the existing study as a special case.

\medskip

\medskip

\noindent{\bf Keywords:}~~anomalous localized resonance, plasmonic material, negative elastic materials, asymptotic and spectral analysis

\noindent{\bf 2010 Mathematics Subject Classification:}~~35B34; 74E99; 74J20

\end{abstract}

\section{Introduction}

\subsection{Mathematical setup}

We are concerned with the elastic wave scattering governed by the Lam\'e system in $\mathbb{R}^3$. Suppose that the space $\mathbb{R}^3$ is occupied by an isotropic elastic medium whose material property is characterized by a four-rank tensor $\mathbf{C}(\bx):=(\mathrm{C}_{ijkl}(\bx))_{i,j,k,l=1}^3$,
 \begin{equation}\label{eq:lame_constant}
 \mathrm{C}_{ijkl}(\bx):=\lambda(\bx)\bm{\delta}_{ij}\bm{\delta}_{kl}+\mu(\bx)(\bm{\delta}_{ik}\bm{\delta}_{jl}+\bm{\delta}_{il}\bm{\delta}_{jk}),\ \ \bx=(x_\alpha)_{\alpha=1}^3\in\mathbb{R}^3,
 \end{equation}
where $\lambda, \mu$ are real-valued functions, and $\bm{\delta}$ is the Kronecker delta. $\lambda$ and $\mu$ are the modulus of elasticity and are referred to as the Lam\'e parameters. For a regular elastic material, the Lam\'e parameters satisfy the following two strong convexity conditions,
 \begin{equation}\label{eq:convex}
 \mathrm{i)}.~~\mu>0\qquad\mbox{and}\qquad \mathrm{ii)}.~~3\lambda+2\mu>0.
 \end{equation}
 It is assumed that outside a bounded domain $D$, the elastic medium is uniform and homogeneous; that is,
 \begin{equation}\label{eq:homo1}
 \lambda=\lambda_0\quad\mbox{and}\quad \mu=\mu_0\quad\mbox{in}\ \ \mathbb{R}^3\backslash\overline{D},
 \end{equation}
where $\lambda_0$ and $\mu_0$ are two constants, particularly satisfying \eqref{eq:convex}. In order to describe the elastic wave propagation in the space $(\mathbb{R}^3; \lambda, \mu)$ mentioned above, we first introduce the Lam\'e operator
\begin{equation}\label{eq:lame}
\mathcal{L}_{\lambda,\mu} \mathbf{u}(\mathbf{x}):=\nabla\cdot\mathbf{C}\widehat{\nabla}\mathbf{u}(\mathbf{x})=\mu\Delta\mathbf{u}(\mathbf{x})+(\lambda+\mu)\nabla\nabla\cdot\mathbf{u}(\mathbf{x}),
\end{equation}
where $\mathbf{u}\in\mathbb{C}^3$ signifies the displacement field and $\widehat{\nabla}$ represents the symmetric gradient,
 \begin{equation}\label{eq:sg1}
 \widehat{\nabla}\mathbf{u}:=\frac{1}{2}\left(\nabla\mathbf{u}+\nabla\mathbf{u}^t \right)
 \end{equation}
 with the superscript $t$ denoting the matrix transpose. We consider our study in the frequency regime and let $\omega\in\mathbb{R}_+$ denote the frequency of the time-harmonic elastic wave. Let $\mathbf{f}\in H^{-1}(\mathbb{R}^3)^3$ denote a forcing/source term. Henceforth, we always assume that $\mathbf{f}$ is compactly supported in $\mathbb{R}^3$. Then the elastic wave scattering is described by the following Lam\'e system for $\mathbf{u}\in H_{loc}^1(\mathbb{R}^3)^3$,
\begin{equation}\label{eq:lame2}
\begin{cases}
& \mathcal{L}_{\lambda, \mu}\mathbf{u}(\mathbf{x})+\omega^2 \mathbf{u}(\bx)=\mathbf{f}\quad\mbox{in}\ \ \mathbb{R}^3,\medskip\\
& \mbox{$\mathbf{u}(\bx)$ satisfies the radiation condition. }
\end{cases}
\end{equation}
In \eqref{eq:lame2}, $\mathbf{u}$ is said to satisfy the radiation condition (or equivalently, to be radiating), if there holds the following Kupradze radiation condition as $|\mathbf{x}|\rightarrow+\infty$ (cf. \cite{Kup}),
\begin{equation}\label{eq:radiating}
\begin{split}
(\nabla\times\nabla\times\mathbf{u})(\bx)\times\frac{\bx}{|\bx|}-\mathrm{i}k_T\nabla\times\mathbf{u}(\bx)=&\mathcal{O}(|\bx|^{-2}),\\
\frac{\bx}{|\bx|}\cdot[\nabla(\nabla\cdot\mathbf{u})](\bx)-\mathrm{i}k_L\nabla\mathbf{u}(\bx)=&\mathcal{O}(|\bx|^{-2}),
\end{split}
\end{equation}
where $k_T:=\omega/\sqrt{\mu_0}$ and $k_L:=\omega/\sqrt{\lambda_0+2\mu_0}$. The strong convexity conditions in \eqref{eq:convex} guarantee the ellipticity of the PDO (partial differential operator) $\mathcal{L}_{\lambda,\mu}$ and hence the well-posedness of the scattering system \eqref{eq:lame2}.

In metamaterials, exotic elastic materials with negative material parameters have been artificially engineered; see e.g. \cite{KM} and \cite{LLBW}. In this paper, we consider the case that the negative elastic material is supported in $D$ of the following form
\begin{equation}\label{eq:parame_inside}
\widetilde\lambda=(\epsilon_1+\mathrm{i}\delta)\lambda_0\quad\mbox{and}\quad \widetilde\mu=(\epsilon_2+\mathrm{i}\delta)\mu_0,
\end{equation}
where $\epsilon_1$ and $\epsilon_2$ are two real scaling parameters and $\delta\in\mathbb{R}_+$ signifies the loss of the medium. The choice of $\epsilon_1$ and $\epsilon_2$ is critical and shall be investigated in details in what follows. In principle, $\epsilon_1$ and $\epsilon_2$ shall be chosen such that $\widetilde\lambda$ and $\widetilde\mu$ violate the convexity conditions in \eqref{eq:convex}. In such a case, $(D; \widetilde\lambda, \widetilde\mu)$ is referred to as a plasmonic inclusion. In what follows, we let $\widetilde{\mathbf{C}}$ signify the elastic tensor where the Lam\'e parameters are specified by $(\widetilde\lambda,\widetilde\mu)$ in $D$, and by $(\lambda_0,\mu_0)$ in $\mathbb{R}^3\backslash\overline{D}$, respectively. We also set $\mathbf{C}_0$ to signify the elastic tensor where the Lam\'e parameters are specified by $(\lambda_0, \mu_0)$ throughout $\mathbb{R}^3$. Now, let us consider the elastic wave scattering system \eqref{eq:lame2} associated with the medium configuration $\widetilde{\mathbf{C}}$, and denote by $\mathbf{u}_\delta$ the corresponding radiating field. Define,
\begin{equation}\label{eq:def0}
  E(\bu_{\delta},\mathbf{f}):=\delta\int_{D} \widehat{\nabla}\bu_{\delta}:\mathbf{C}_0\overline{\widehat{\nabla}\bu_{\delta}(\bx)}d\bx,
\end{equation}
where $\mathbf{A}:\mathbf{B}=\sum_{i,j}a_{ij}b_{ij}$ for two matrices $\mathbf{A}=(a_{ij})$ and $\mathbf{B}=(b_{ij})$. $E(\mathbf{u}_\delta,\mathbf{f})$ signifies the energy dissipation of the elastic system \eqref{eq:lame2} due to the presence of the plasmonic inclusion $(D;\widetilde\lambda,\widetilde\mu)$ and the source $\mathbf{f}$. The elastic configuration is said to be resonant if
\begin{equation}\label{eq:def1}
 \limsup_{\delta\rightarrow+0} E(\bu_{\delta},\mathbf{f})= +\infty.
\end{equation}
By \eqref{eq:def0} and \eqref{eq:def1}, it can be easily infer that if resonance occurs, then high oscillation would occur for the elastic wave field. If such a highly oscillatory behaviour of the resonant elastic wave field is confined within a certain bounded region, and outside of that region, the elastic field is convergent to a smooth one, then we say that cloaking due to anomalous localized resonance occurs.

\subsection{Background and motivation}

The mathematical research on plasmonic resonances has received significant attentions in recent years. The key ingredient is that due to the presence of negative material parameters, the ellipticity of the underlying PDO is broken. The non-elliptic PDO may then possess a nontrivial kernel and hence various resonance phenomena can be induced for an appropriate forcing term. Particularly, highly oscillatory behaviour of the resonant field is observed and hence energy blowup occurs accompanying the resonance. There are some other peculiar resonance behaviours have been found, such as the localizing effect; that is, the energy blowup of the resonant field is confined within a certain region and outside of that region, the resonant field remains bounded. The plasmonic resonances have been proposed for many striking potential applications including maging resolution enhancement, invisibility cloaking and energy harvesting. There have been intensive and extensive studies in the literature for optics and we refer to \cite{Ack13,Bos10,Brl07,CKKL,Klsap,LLL,GWM1,GWM2,GWM3,GWM4,GWM6,GWM7,GWM8} for the relevant results in electrostatics governed by the Laplace equation, \cite{ADM,AKL,KLO} in acoustics governed by the Helmholtz equation and \cite{ARYZ} in electromagnetism governed by the Maxwell system.

The plasmonic resonance associated to the Lam\'e system has been recently studied in \cite{AKKY2,DLL,LiLiu2d,LiLiu3d}. According to our earlier discussion, the first crucial ingredient is the appropriate choice of the plasmon Lam\'e parameters $\epsilon_1\lambda_0$ and $\epsilon_2\mu_0$ such that the following homogeneous Lam\'e system possesses nontrivial solutions,
\begin{equation}\label{eq:lame3}
\bm{\psi}\in H_{loc}^1(\mathbb{R}^3)^3,\quad \mathcal{L}_{\widehat\lambda,\widehat\mu} \bm{\psi}(\bx)=0, \quad \bm{\psi}(\bx)=\mathcal{O}(|\bx|^{-1})\ \ \mbox{as}\ \ |\bx|\rightarrow+\infty,
\end{equation}
where
\begin{equation}\label{eq:ps1}
(\widehat\lambda,\widehat\mu)=(\epsilon_1\lambda_0,\epsilon_2\mu_0)\chi_{D}+(\lambda_0,\mu_0)\chi_{\mathbb{R}^3\backslash\overline{D}},
\end{equation}
with $\chi$ denoting the characteristic function. The nontrivial solutions to \eqref{eq:lame3} are referred to as the perfect elastic waves in the literature which form the nontrivial kernel of the PDO $\mathcal{L}_{\widehat\lambda,\widehat\mu}$. In the aforementioned existing studies \cite{AKKY2,DLL,LiLiu2d,LiLiu3d}, the corresponding analysis heavily relies on a peculiar plasmonic structure of the form \eqref{eq:ps1} with $\epsilon_1=\epsilon_2=\epsilon$ being a proper negative number. That is, the plasmon parameters inside the inclusion are both proportional to the homogeneous background elastic parameters by a same negative constant. This assumption significantly simplifies the corresponding mathematical arguments in \cite{LiLiu2d,LiLiu3d} through the variational approach and in \cite{AKKY2,DLL} through the spectral approach via the use of the so-called Neumann-Poincar\'e operator. Indeed, the simplification is critical to the derivation of the perfect plasmon waves as well as the primal and dual variational principles for the Lam\'e system in \cite{LiLiu2d,LiLiu3d}; and to the reduction of the resonance study to an integral system associated with the Neumann-Poincar\'e operator in \cite{AKKY2,DLL}. This naturally raises a question: what is the appropriate definition of (plasmonic) negativity in elasticity? This question is easily understood in the context of optics, where two material parameters are involved, namely the electric permittivity and magnetic permeability. The positivity of the two optical parameters is also the mathematical condition to guarantee the ellipticity of the corresponding PDOs governing the relevant physical phenomena. Hence, if either one or both of the two optical parameters is (are) negative, then plasmonic resonances can be expected. This has actually been verified in the earlier mentioned literature. Hence, in the context of elasticity, it is unobjectionable to conjecture that for an elastic inclusion in \eqref{eq:ps1}, if $\epsilon_1$ and $\epsilon_2$ are such chosen that either one of the two convexity conditions in \eqref{eq:convex} is violated by the Lam\'e parameters $\epsilon_1\lambda_0$ and $\epsilon_2\mu_0$, then plasmonic resonance can be expected. We note that the existing studies in \cite{AKKY2,DLL,LiLiu2d,LiLiu3d} are a particular case of this conjecture where both the convexity conditions are violated. The first objective of the paper is to affirmatively verify this conjecture. In fact, we shall show that if either one of the two convexity conditions on the Lam\'e parameters is broken, then we can construct certain plasmon structures that induce resonances. Our construction is actually exclusive and includes the existing plasmonic structures as a special case. The second objective of the paper is concerned with the verification of the quasi-static approximation for the plasmonic resonance in elasticity. The plasmonic resonance has been experimentally discovered in the nanoscale. To study this phenomenon mathematically, one usually sets the frequency to be zero in the physical model based on the heuristic principle of quasi-static approximation. This approximate model has been adopted for the study in \cite{AKKY2,DLL,LiLiu2d,LiLiu3d}. In this paper, we shall work on the resonance in the finite-frequency regime associated with diametrically small plasmonic inclusions, and rigorously verify the validity of the quasi-static approximation. The third objective of our study is to consider the application of plasmonic resonances in invisibility cloaking.
As an application of the newly found structures, we construct a plasmonic device of the core-shell-matrix form that can induce cloaking due to anomalous localized resonance. Due to technical reasons, we only consider our study in this part in the quasi-static regime. Nevertheless, it still includes the localizing and cloaking study of plasmonic resonances in \cite{DLL} as a particular case.

Finally, we would like to remark two issues in our study. First, we confine our study within the spherical geometry. It might be innocently thought that working within the spherical geometry associated with piecewise constant elastic configuration, one might make use of the separating variable technique by conveniently manipulating the spherical wave series of the solutions in order to determine the plasmonic resonance. However, using the Fourier series technique, one would derive a large-scale system with many unknowns coupled together in many infinite series, it would be really hard to find a clue to determine the suitable plasmonic configurations that can induce resonance. As can be seen in our subsequent study, we shall make a combination use of layer potential technique, spectral analysis, asymptotic analysis as well as Fourier series expansion to derive all the possible plasmonic structures that can induce resonance in terms of energy blowup within the finite-frequency regime. Indeed, the spherical geometry is basically required in all of the existing stuides \cite{AKKY2,DLL,LiLiu2d,LiLiu3d} since essentially one would need accurate information on the perfect plasmon modes to \eqref{eq:lame3} in order to study the plasmonic resonance. In order to overcome this geometrical restriction, one might resort to the computer using numerical methods, but this is a matter beyond the scope of the present article.
Second, we are mainly concerned with the mathematical and theoretical study of plasmon materials in linear elasticity, and the engineering construction and physical implementation are beyond our scope. We hope that our theoretical findings may motivate some novel practical applications.

The rest of the paper is organized as follows. In Section 2, we collect some preliminary knowledge on quasi-static approximation and properties of layer potential operators, which shall be needed in the subsequent study. Section 3 is devoted to the main results on the novel elastic structures and the corresponding plasmonic resonances. In Section 4, we consider the cloaking due to anomalous localized resonance. Our study is concluded with some relevant discussion in Section 5.

\section{Preliminaries on the Lam\'e system}

\subsection{Quasi-static approximation}

We first fix a definite setup for our study.  As discussed earlier, we shall consider the plasmonic resonance for diametrically small elastic inclusion, namely, the Lam\'e system \eqref{eq:lame2}--\eqref{eq:radiating} associated with the elastic configuration $\widetilde{\mathbf{C}}$ defined after \eqref{eq:parame_inside}. Let $\Omega$ be a bounded domain with a connected complement and $\tau\in\mathbb{R}_+$ be an asymptotically small scaling parameter. Set $D=\tau\Omega$. Then by a straightforward change of variable $\by=\bx/\tau$, one can show that if $\mathbf{u}_\delta(\bx)$ satisfies the Lam\'e system \eqref{eq:lame2} associated with $\widetilde{\mathbf{C}}$ and $\mathbf{f}$, then $\bv_\delta(\by):=\bu_\delta(\bx)$ verifies the following Lam\'e system,
\begin{equation}\label{eq:lame4}
\begin{cases}
& \mathcal{L}_{\lambda, \mu}\mathbf{v}_\delta(\mathbf{y})+(\tau\omega)^2 \mathbf{v}_\delta(\by)=\mathbf{g}(\by)\quad\mbox{in}\ \ \mathbb{R}^3,\medskip\\
& \mbox{$\mathbf{v}_\delta(\by)$ satisfies the radiation condition, }
\end{cases}
\end{equation}
where $(\lambda,\mu)=(\widetilde\lambda,\widetilde\mu)\chi_\Omega+(\lambda_0,\mu_0)\chi_{\mathbb{R}^3\backslash\overline{\Omega}}$ and $\mathbf{g}(\by)=\tau^2\mathbf{f}(\tau \by)$. Hence, in order to study the resonance associated with $\widetilde{\mathbf{C}}$ and $\mathbf{f}$ for a diametrically small plasmonic inclusion, it suffices to consider the resonance associated with the Lam\'e system \eqref{eq:lame4}. Note that for an asymptotically small scaling parameter $\tau$, this is also equivalent to considering the resonance
for the Lam\'e system \eqref{eq:lame2}--\eqref{eq:radiating} associated with $\widetilde{\mathbf{C}}$ and $\mathbf{f}$ as $\omega\rightarrow+0$. Hence, in order to save notations, throughout the rest of the paper, we shall conduct our study for the Lam\'e system \eqref{eq:lame2}--\eqref{eq:radiating} for $\mathbf{u}_\delta$ associated with $\widetilde{\mathbf{C}}$ and $\mathbf{f}$ as $\omega\rightarrow+0$. As discussed in the introduction, we shall confine our study within spherical geometry; that is, the plasmonic inclusion $D$ is a central ball. However, at most of places in the subsequent study, we shall still stick to the notation $D$ of a general smooth domain. Indeed, most of the subsequent results hold for a general smooth domain. But there are some results which are only valid for a ball, and in such cases, we shall clearly indicate it. In doing so, some of the results can be directly used for future developments within general geometries.

\subsection{Properties on layer potential operator}
We present some preliminary knowledge on layer potential operators for the Lam\'e system. First, we introduce the fundamental solution to the PDO $\mathcal{L}_{\lambda_0,\mu_0} + \omega^2$, namely the Kupradze matrix $\mathbf{\Gamma}^{\omega}=(\mathbf{\Gamma}^{\omega}_{j,k})^3_{j,k=1}$ given by
\begin{equation}\label{eq:fundamentalsolution_w}
  \mathbf{\Gamma}^{\omega}_{j,k} = -\frac{\bm{\delta}_{jk}}{4\pi\mu|\bx|} e^{\frac{\mathrm{i}\omega|\bx|}{c_T}} + \frac{1}{4\pi\omega^2}\partial_j\partial_k \frac{e^{\frac{\mathrm{i}\omega|\bx|}{c_T}} - e^{\frac{i\omega|\bx|}{c_L}}}{|\bx|},
\end{equation}
with $\partial_j$ denoting $\partial / \partial \bx_j$, and
\begin{equation}
  c_T=\sqrt{\mu_0} , \quad c_{L}=\sqrt{\lambda_0 + 2 \mu_0}.
\end{equation}
From the expression \eqref{eq:fundamentalsolution_w} one can readily have the following expansion:
\begin{equation}
\begin{split}
   \mathbf{\Gamma}^{\omega}_{j,k}= & -\frac{1}{4\pi}\sum_{n=0}^{+\infty} \frac{\mathrm{i}^n}{(n+2)n!}\left( \frac{n+1}{c_{T}^{n+2}} + \frac{1}{c_{L}^{n+2}} \right)\omega^n\bm{\delta}_{jk}|\bx|^{n-1} \\
    & +\frac{1}{4\pi}\sum_{n=0}^{+\infty} \frac{\mathrm{i}^n(n-1)}{(n+2)n!}\left( \frac{1}{c_{T}^{n+2}} - \frac{1}{c_{L}^{n+2}} \right)\omega^n|\bx|^{n-3}x_j x_k.
\end{split}
\end{equation}
If $\omega=0$, then the fundamental solution to the PDO $\mathcal{L}_{\lambda_0,\mu_0}$, namely the Kelvin matrix $\mathbf{\Gamma}^{0}=(\mathbf{\Gamma}^{0}_{j,k})^3_{j,k=1}$ is given by
\begin{equation}\label{eq:fundamentalsolution_0}
  \mathbf{\Gamma}^0_{j,k}(\bx)=-\frac{\gamma_1}{4\pi} \frac{\delta_{jk}}{|\bx|} -\frac{\gamma_2}{4\pi} \frac{x_j x_k}{|\bx|^3},
\end{equation}
where
\begin{equation}
  \gamma_1=\frac{1}{2}\left( \frac{1}{\mu_0} + \frac{1}{2\mu_0+\lambda_0} \right) \quad \mbox{and} \quad \gamma_2=\frac{1}{2}\left( \frac{1}{\mu_0} - \frac{1}{2\mu_0+\lambda_0}\right).
\end{equation}
It can be verified that there holds the following relationship,
\begin{equation}\label{eq:expansion_gamma}
  \mathbf{\Gamma}^{\omega}= \mathbf{\Gamma}^{0} + \omega \bM^{\omega},
\end{equation}
where $\bM^{\omega}=(\bM^{\omega}_{jk})_{j,k=1}^3$ is given by
\begin{equation}
  \begin{split}
   \bM^{\omega}_{j,k}= & -\frac{1}{4\pi}\sum_{n=1}^{+\infty} \frac{\mathrm{i}^n}{(n+2)n!}\left( \frac{n+1}{c_{T}^{n+2}} + \frac{1}{c_{L}^{n+2}} \right)\omega^{n-1}\delta_{jk}|\bx|^{n-1} \\
    & +\frac{1}{4\pi}\sum_{n=2}^{+\infty} \frac{\mathrm{i}^n(n-1)}{(n+2)n!}\left( \frac{1}{c_{T}^{n+2}} - \frac{1}{c_{L}^{n+2}} \right)\omega^{n-1}|\bx|^{n-3} x_j x_k.
\end{split}
\end{equation}

Next, we introduce the layer potential operators. Given a distributional function $\bvarphi\in H^{-1/2}(\partial D)^3$, the single layer potential operator is defined as
\begin{equation}
  \Scal^{\omega}_{D}[\bvarphi](\bx) = \int_{\partial D} \mathbf{\Gamma}^{\omega}(\bx-\by)\bvarphi(\by)ds(\by), \quad \bx\in \mathbb{R}^3.
\end{equation}
 The conormal derivative (or the traction) of a function $\mathbf{u}\in H^1(D)$ on the boundary $\partial D$ is defined as
\begin{equation}\label{eq:def_traction}
  \frac{\partial \bu}{\partial\bnu} := \lambda_0(\nabla \cdot \bu)\bnu + \mu_0(\nabla \bu + \nabla \bu^t)\bnu,
\end{equation}
where $\bnu$ denotes the exterior unit normal vector to $\partial D$. Then the conormal derivative of the single layer potential enjoys the following jump relation (cf. \cite{Kup}),
\begin{equation}\label{eq:jump_single}
  \frac{\partial}{\partial \bnu} \Scal^{\omega}_D[\bvarphi]|_{\pm}(\bx)=\left( \pm \frac{1}{2}I + (\Kcal^\omega_D)^* \right) [\bvarphi](\bx), \quad \mbox{a.e.} \; \bx\in \partial D,
\end{equation}
where $(\Kcal^{\omega}_D)^*$ is the so-called Neumann-Poincar\'e (N-P) operator defined by
\begin{equation}\label{eq:operator_k_star}
  (\Kcal^{\omega}_D)^*[\bvarphi](\bx)=\mbox{p.v.}\quad\int_{\partial D} \frac{\partial}{\partial\bnu_\bx} \mathbf{\Gamma}^{\omega}(\bx-\by)\bvarphi(\by)ds(\by).
\end{equation}
In \eqref{eq:operator_k_star}, p.v. stands for the Cauchy principal value. Here and also what in follows, the subscripts $+$ and $-$ indicate the limits (to $\partial D$) from outside and inside $D$, respectively, and $\frac{\partial}{\partial\bnu_\bx} \mathbf{\Gamma}^{\omega}(\bx-\by)\bvarphi(\by)$ is defined by
$$\frac{\partial}{\partial\bnu_\bx} \mathbf{\Gamma}^{\omega}(\bx-\by)\bvarphi(\by):=\frac{\partial}{\partial\bnu_\bx} (\mathbf{\Gamma}^{\omega}(\bx-\by)\bvarphi(\by)).$$
When $\omega=0$, we denote $\Scal^{0}_{D}$ and $(\Kcal^{0}_{D})^*$ as $\Scal_{D}$ and $\Kcal^*_{D}$ for simplicity. From the decomposition of $\mathbf{\Gamma}^{\omega}$ in \eqref{eq:expansion_gamma}, one can obtain
\begin{equation}\label{eq:decom_single}
   \Scal^{\omega}_{D}= \Scal_{D} + \omega \Pcal^{\omega},
\end{equation}
with
\begin{equation}
  \Pcal^{\omega}[\bvarphi](\bx) =  \int_{\partial D} \bM^{\omega}(\bx-\by)\bvarphi(\by)ds(\by), \quad \bx\in \mathbb{R}^3.
\end{equation}
Furthermore, the operator $(\Kcal^{\omega}_D)^*[\bvarphi](\bx)$ could be decomposed as
\begin{equation}\label{eq:decom_K*}
  (\Kcal^{\omega}_D)^*=\Kcal^*_{D} + \omega \Qcal^{\omega},
\end{equation}
with
\begin{equation}
   \Qcal^{\omega}[\bvarphi](\bx)=\partial_{\bnu} \Pcal^{\omega}[\bvarphi](\bx).
\end{equation}
Let $\omega_1$ be a small positive number, then there exists a positive constant $C$ independent of $\omega<\omega_1$ such that
\begin{equation}\label{eq:217}
  \int_{D}\int_{\partial D}|\partial_\bx^{\alpha} \bM^{\omega}_{j,k}(\bx-\by)|^2ds(\by)d\bx\leq C,
\end{equation}
for all $\alpha=(\alpha_1,\alpha_2,\alpha_3)$ satisfying $|\alpha| \leq2$, where $\partial_\bx^{\alpha}$ is the partial derivative with respect to $\bx$. Then with the help of the Cauchy-Schwarz inequality and the last inequality \eqref{eq:217}, one can show that
\begin{equation}
  \norm{\Pcal^{\omega}[\bvarphi]}_{H^2(D)^3} \leq C \norm{\bvarphi}_{L^2(\partial D)^3},
\end{equation}
and
\begin{equation}
  \norm{\Qcal^{\omega}[\bvarphi]}_{H^1(D)^3} \leq C \norm{\bvarphi}_{L^2(\partial D)^3},
\end{equation}
for all $\bvarphi \in L^2(\partial D)^3$. By the trace theorem, $\Pcal^{\omega}$ maps $L^2(\partial D)^3$ into $H^{3/2}(\partial D)^3$ and $\Qcal^{\omega}$ maps $L^2(\partial D)^3$ into $H^{1/2}(\partial D)^3$. By duality,  $\Pcal^{\omega}$ maps $H^{-3/2}(\partial D)^3$ into $L^2(\partial D)^3$, and by interpolation, $H^{-1/2}(\partial D)^3$ into $H^1(\partial D)^3$. Similarly, one can show that $\Qcal^{\omega}$ maps $H^{-1/2}(\partial D)^3$ into $L^2(\partial D)^3$. To sum up, for a given $\omega_1$, there exists a positive constant $C$ independent of $\omega\leq \omega_1$ such that
\begin{equation}\label{eq:estimate_P}
  \norm{\Pcal^{\omega}[\bvarphi]}_{H^1(\partial D)^3} \leq C \norm{\bvarphi}_{H^{-1/2}(\partial D)^3},
\end{equation}
and
\begin{equation}\label{eq:estimate_Q}
  \norm{\Qcal^{\omega}[\bvarphi]}_{H^0(\partial D)^3} \leq C \norm{\bvarphi}_{H^{-1/2}(\partial D)^3},
\end{equation}
for all $\bvarphi \in H^{-1/2}(\partial D)^3$.

Next we define a new inner product on $H^{-1/2}(\partial D)^3$ by
\begin{equation}\label{eq:def_inner_h_star}
  (\bvarphi,\bpsi)_{\mch^*}:= -\la\bvarphi, \Scal_D[\bpsi]\ra, \quad \bvarphi,\bpsi \in H^{-1/2}(\partial D)^3,
\end{equation}
where $\la\cdot,\cdot\ra$ is the duality pairing of $H^{-1/2}(\partial D)^3$ and $H^{1/2}(\partial D)^3$. Since the operator $\Scal_D$ is invertible from $H^{-1/2}(\partial D)^3$ to $H^{1/2}(\partial D)^3$, hence the right side of \eqref{eq:def_inner_h_star} is well-defined. It can be verified that $(\cdot,\cdot)_{\mch^*}$ is the inner product on $H^{-1/2}(\partial D)$ and we denote by $\norm{\cdot}_{\mch^*}$ the norm induced by this inner product; see \cite{AJKKY15} for more relevant results. Furthermore, there holds
\begin{equation}
 \norm{\bvarphi}_{\mch^*} \approx \norm{\bvarphi}_{H^{-1/2}(\partial D)^3}.
\end{equation}
{In what follows, we denote the space $H^{1/2}(\partial D)^3$ by $\mch$.}

The N-P operator $\Kcal^*_{D}$ is not a compact operator even if the domain $D$ has a smooth boundary, thus we cannot infer directly that the N-P operator has point spectrum. However, in \cite{DLL},  when the domain $D$ is a ball, the eigenvalues of the N-P operator $\Kcal^*_{D}$ and its corresponding eigenfunctions have been explicitly derived.
\begin{thm}[\cite{DLL}]\label{thm:eigenfunction}
  Suppose that the domain $D$ is a central ball of radius $r_0$, then the eigenvalues of the operator $\Kcal^*_D$ are given by
  \begin{equation}\label{eq:eigenvalue}
    \begin{split}
      \xi^n_1= & \frac{3}{4n+2}, \\
      \xi^n_2= &\frac{3\lambda_0-2\mu(2n^2-2n-3)}{2(\lambda_0+2\mu_0)(4n^2-1)}, \\
      \xi^n_3= & \frac{-3 \lambda_0 + 2\mu_0(2n^2 + 2n - 3)}{2(\lambda_0 + 2\mu_0)(4n^2 - 1)},
    \end{split}
  \end{equation}
  where $n=1,2,3,\ldots$, and the corresponding eigenfunctions are given as follows,
  \begin{equation}\label{eq:eigenfunctions}
    \begin{split}
    \bkappa_1^{n,m}(\bx)= & \nabla_{\mathbb{S}}  Y^m_{n}(\hat{\bx}) \times \bnu_\bx,\\
    \bkappa_2^{n,m}(\bx)= & \nabla_{\mathbb{S}} Y^m_{n}(\hat{\bx}) +n  Y^m_{n}(\hat{\bx}) \bnu_\bx, \\
    \bkappa_3^{n,m}(\bx)= &-\nabla_{\mathbb{S}}  Y^m_{n-1}(\hat{\bx}) + nY^m_{n-1}(\hat{\bx})\bnu_\bx,
  \end{split}
  \end{equation}
 where $Y_n^m$, $m=-n,\ldots,n$, $n\in\mathbb{N}$, are the spherical harmonics of order $n$ and degree $m$, $\mathbb{S}$ denotes the unit sphere and $\nabla_{\mathbb{S}}$ is the surface gradient on $\mathbb{S}$.
\end{thm}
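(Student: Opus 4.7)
The plan is to reduce the eigenvalue problem for $\mathcal{K}_D^*$ to an explicit computation of single layer potentials on vector spherical harmonics. Averaging the two sides of the jump formula \eqref{eq:jump_single} with $\omega=0$ gives the identity
\begin{equation}
\mathcal{K}_D^*[\bvarphi]=\frac{1}{2}\Big(\tfrac{\partial \mathcal{S}_D[\bvarphi]}{\partial \bnu}\Big|_{+}+\tfrac{\partial \mathcal{S}_D[\bvarphi]}{\partial \bnu}\Big|_{-}\Big),
\end{equation}
so determining the action of $\mathcal{K}_D^*$ on each $\bkappa_i^{n,m}$ reduces to computing the interior and exterior conormal traces of $\mathcal{S}_D[\bkappa_i^{n,m}]$ on $\partial D=\partial B_{r_0}$.

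First I would set up an adapted basis of homogeneous Lamé solutions in $B_{r_0}$ and in $\mathbb{R}^3\setminus\overline{B_{r_0}}$, expanded in vector spherical harmonics via a Papkovich--Neuber (or Boussinesq-type) representation. The interior solutions are spanned by objects of the form $\nabla(r^{n+1}Y_n^m)$, $r^{n}\nabla_\mathbb{S} Y_n^m$, and $\nabla\times(r^{n+1}Y_n^m\,\hat{\bx})$, with the decaying counterparts in $r^{-n-2}$ in the exterior. This explains why the eigenfunction $\bkappa_3^{n,m}$ is built from $Y_{n-1}^m$ while $\bkappa_2^{n,m}$ is built from $Y_n^m$: taking a gradient of a Papkovich potential of the form $r^{n+1}Y_n^m\,\bnu$ produces a surface field of spherical-harmonic degree $n-1$, and similarly for the conormal derivative. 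Identities such as $\Delta_\mathbb{S} Y_n^m=-n(n+1)Y_n^m$ and $\bnu\cdot\nabla(r^{k}Y_n^m)=kr^{k-1}Y_n^m$ will be used throughout the bookkeeping.

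Next I would write $\mathcal{S}_D[\bkappa_i^{n,m}]=\bu^-$ in $B_{r_0}$ and $=\bu^+$ in its complement, expand $\bu^{\pm}$ in the basis constructed above with unknown coefficients, and impose the two transmission conditions that characterise a single layer potential with density $\bkappa_i^{n,m}$:
\begin{equation}
\bu^+=\bu^- \quad\text{and}\quad \tfrac{\partial\bu^+}{\partial\bnu}\Big|_{+}-\tfrac{\partial\bu^-}{\partial\bnu}\Big|_{-}=\bkappa_i^{n,m}\quad\text{on }\partial D.
\end{equation}
For the purely toroidal, surface-divergence-free density $\bkappa_1^{n,m}=\nabla_\mathbb{S} Y_n^m\times\bnu$, the system decouples into a single scalar relation, which produces $\xi_1^n=3/(4n+2)$ after a short calculation involving only $\mu_0$ and the radial factors $r_0^{\pm n}$. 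For the densities $\bkappa_2^{n,m}$ and $\bkappa_3^{n,m}$, which couple radial and tangential components via the term $\nabla\nabla\cdot\bu$ in the Lamé operator, the analogue reduces to a $2\times2$ algebraic system in the unknown interior/exterior coefficients whose entries involve $\lambda_0$ and $\mu_0$. Averaging the two conormal traces and diagonalising then yields the two formulas for $\xi_2^n$ and $\xi_3^n$ in \eqref{eq:eigenvalue}.

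The main obstacle is precisely the coupling between the $\bkappa_2$ and $\bkappa_3$ sectors: \emph{a priori} the operator $\mathcal{K}_D^*$ restricted to the span of $\{\bkappa_2^{n,m},\bkappa_3^{n,m}\}$ need not be diagonal, and the nontrivial content of the theorem is that the particular coefficients $n$ in front of the normal components in \eqref{eq:eigenfunctions}, together with the shift $n\leftrightarrow n-1$, diagonalise it exactly. Verifying this diagonalisation requires a careful expansion of the traction operator \eqref{eq:def_traction} applied to each basis element, matching orders in $n$ against those produced by $\bkappa_2^{n,m}$ and $\bkappa_3^{n,m}$. Finally, the degenerate low-order cases $n=0,1$ (where $Y_{n-1}^m$ is absent or the coefficient $n$ vanishes) should be checked separately; there the corresponding fields correspond to rigid motions or trivial modes of the exterior Lamé problem and do not perturb the generic eigenvalue formula.
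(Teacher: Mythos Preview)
The paper does not give a proof of this theorem at all: it is stated with the attribution \cite{DLL} and immediately followed by Remark~\ref{rem:21}, so there is nothing in the paper to compare your argument against. Your outline is therefore not competing with a proof in this paper but with the computation carried out in the cited reference.

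That said, your strategy is the natural one and is essentially what is done in \cite{DLL}: compute $\mathcal{S}_D[\bkappa_i^{n,m}]$ explicitly by matching interior and exterior homogeneous Lam\'e solutions across $\partial B_{r_0}$ via the transmission conditions for a single layer, and then read off $\mathcal{K}_D^*$ from the averaged jump relation. One point worth sharpening in your sketch is the description of the ``$2\times 2$ coupling'': for a fixed spherical-harmonic degree the non-toroidal sector couples $\bkappa_2^{n,m}$ (built from $Y_n^m$) with $\bkappa_3^{n+1,m}$ (also built from $Y_n^m$), not with $\bkappa_3^{n,m}$. The index shift in the labelling of $\bkappa_3$ is precisely what makes the family \eqref{eq:eigenfunctions} diagonal for $\mathcal{K}_D^*$, so when you set up the $2\times 2$ system you should work at fixed harmonic degree and then relabel. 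Apart from that bookkeeping issue, the plan is sound; the low-order cases $n=1$ you flag are indeed handled separately (cf.\ Remark~\ref{rem:22}).
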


\begin{rem}\label{rem:21}
 It has been pointed out in \cite{DLL} that the eigenfunctions $\bkappa_i^{n,m}$ given in \eqref{eq:eigenfunctions} form an orthogonal basis on $L^2(\mathbb{S})^3$. By normalization, we can form an orthonormal basis with respect to the norm $\norm{\cdot}_{\mch^*}$ from those eigenfunctions, which we still denote by the same notations. That is, in what follows, we shall assume that
\begin{equation}
  \norm{\bkappa_i^{n,m}}_{\mch^*}=1,
\end{equation}
for $i=1,2,3$, $n\geq 1$ and $-n\leq m \leq n$. Moreover, these eigenfunctions are also the eigenfunctions of the operator $\Scal_{D}$ on the surface of the central ball $D$ of radius $r_0$. In fact, we have
\begin{equation}\label{eq:eigenfunctions_single}
  \begin{split}
      & \Scal_{D}[\bkappa_1^{n,m}] = e_{1}^n r_0 \bkappa_1^{n,m}, \\
      & \Scal_{D}[\bkappa_2^{n,m}] = e_{2}^n r_0 \bkappa_2^{n,m}, \\
      & \Scal_{D}[\bkappa_3^{n,m}] = e_{3}^n r_0 \bkappa_3^{n,m},
  \end{split}
\end{equation}
where
\begin{equation}\label{eq:coeff_eigenvalue_single}
  \begin{split}
      & e_{1}^n = -\frac{1}{\mu_0(2n+1)}, \\
      & e_{2}^n = -\frac{\mu_0(2+3n) + \lambda_0(n+1)}{\mu_0(\lambda_0 + 2\mu_0)(4n^2-1)}, \\
      & e_{3}^n = -\frac{\lambda_0(n-1)+\mu_0(3n-2)}{\mu_0(\lambda_0 + 2\mu_0)(4n^2-1)}.
  \end{split}
\end{equation}
\end{rem}
\begin{rem}\label{rem:22}
By direct calculations, it can be verified that the traction of $\Scal_{D}[\bkappa_i^{1,m}]$, $i=1,2$ and $m=-1,0,1$, vanishes on the boundary of $\partial D$. This fact shall be used in our subsequent arguments.
\end{rem}

\section{Novel plasmonic structures and resonances}\label{sect:resonance}

We are in a position to derive the novel plasmonic structures and study the associated resonances. In what follows, we let $\mathcal{L}_{\widetilde{\lambda}, \widetilde{\mu}}$, $\partial_{\widetilde{\bnu}}$, $\widetilde{\Scal}^{\omega}_D$ and $(\widetilde{\Kcal}^{\omega}_D)^*$ denote the Lam\'e operator, the associated conormal derivative, and the single layer potential operator and the N-P operator corresponding to the Lam\'e parameters $(\widetilde{\lambda}, \widetilde{\mu})$, respectively.

It can be readily shown that the elastic scattering system for $\mathbf{u}_\delta$ associated with $\widetilde{\mathbf{C}}$ defined after \eqref{eq:parame_inside} and a forcing source $\mathbf{f}$ is equivalent to the following transmission problem,
\begin{equation}\label{eq:problem_system}
  \left\{
    \begin{array}{ll}
      \mathcal{L}_{\widetilde{\lambda}, \widetilde{\mu}}\bu_{\delta}(\bx) + \omega^2\bu_{\delta}(\bx) =0,    & \bx\in D \\
      \mathcal{L}_{\lambda_0, \mu_0}\bu_{\delta}(\bx) + \omega^2\bu_{\delta}(\bx) =\bff, & \bx\in \mathbb{R}^3\backslash \overline{D}\\
      \bu_{\delta}(\bx)|_- = \bu_{\delta}(\bx)|_+,      & \bx\in\partial D \\
      \partial_{\widetilde{\bnu}}\bu_{\delta}(\bx)|_- = \partial_{\bnu}\bu_{\delta}(\bx)|_+, & \bx\in\partial D,
    \end{array}
  \right.
\end{equation}
where $\partial_{\bnu}$ is given in \eqref{eq:def_traction}.
Set
\begin{equation}
  \mathbf{F}(\bx):= \int_{\mathbb{R}^3} \mathbf{\Gamma}^{\omega}(\bx-\by)\bff(\by)d\by, \quad \bx\in \mathbb{R}^3.
\end{equation}
The solution $\bu_{\delta}$ to the system \eqref{eq:problem_system} can be represented as the following integral ansatz,
\begin{equation}\label{eq:solution}
  \bu_{\delta}=
 \left\{
   \begin{array}{ll}
     \widetilde{\Scal}_D^{\omega}[\bvarphi_{\delta}](\bx), & \bx\in D, \\
     \Scal_D^{\omega}[\bpsi_{\delta}](\bx) + \mathbf{F}, &  \bx\in \mathbb{R}^3\backslash \overline{D},
   \end{array}
 \right.
\end{equation}
for some $\bvarphi_{\delta}, \bpsi_{\delta}\in \mch^*$ that will be determined by using the transmission conditions across $\partial D$ in \eqref{eq:problem_system}. By the transmission conditions across $\partial D$, we have
\begin{equation}\label{eq:solution_system}
  \left\{
    \begin{array}{ll}
      \widetilde{\Scal}_D^{\omega}[\bvarphi_{\delta}] - \Scal_D^{\omega}[\bpsi_{\delta}] = \mathbf{F}, \\
      \partial_{\widetilde{\bnu}}\widetilde{\Scal}_D^{\omega}[\bvarphi_{\delta}]|_- - \partial_{\bnu}\Scal_D^{\omega}[\bpsi_{\delta}]|_+ = \partial_{\bnu}\mathbf{F} ,
    \end{array}
  \right.
  \quad \bx\in\partial D.
\end{equation}
Let $X:=\mch^{*}\times \mch^{*}$ and $Y:=\mch \times \mch^*$, and define an operator $\Acal_{\delta}^{\omega}:X\rightarrow Y$ by
\begin{equation}
  \Acal_{\delta}^{\omega} =
 \left[
   \begin{array}{cc}
      \widetilde{\Scal}_D^{\omega} & - \Scal_D^{\omega} \\
     \partial_{\widetilde{\bnu}}\widetilde{\Scal}_D^{\omega}|_- & - \partial_{\bnu}\Scal_D^{\omega}|_+ \\
   \end{array}
 \right].
\end{equation}
Then the system \eqref{eq:solution_system} can be rewritten as
\begin{equation}\label{eq:equation_system}
  \Acal_{\delta}^{\omega}
 \left[
   \begin{array}{c}
     \bvarphi_{\delta} \\
     \bpsi_{\delta} \\
   \end{array}
 \right]=
\left[
  \begin{array}{c}
    \mathbf{F} \\
    \partial_{\bnu}\mathbf{F} \\
  \end{array}
\right].
\end{equation}
Clearly, if the system \eqref{eq:equation_system} with $\delta=0$ is uniquely solvable, then no resonance occurs. Hence, in order to have resonance occurred, one needs to determine suitable $\widetilde\lambda$ and $\widetilde\mu$ defined in \eqref{eq:parame_inside} such that the system \eqref{eq:equation_system} with $\delta=0$ is not uniquely solvable. To that end, we next study the behaviour of the operator $(\Acal_{\delta}^{\omega})^{-1}$ as $\delta \rightarrow +0$.

First, we split $\Acal_{\delta}^{\omega}$ into two parts: $\Acal_{\delta}^{\omega}= \Acal_{\delta} + \Tcal_{\delta}^{\omega} $, where
\begin{equation}
  \Acal_{\delta}=
 \left[
   \begin{array}{cc}
     \widetilde{\Scal}_D & \Scal_D \\
     -1/2I+\widetilde{\Kcal}_D^* & -1/2I-\Kcal_D^* \\
   \end{array}
 \right].
\end{equation}
Then one can have from \eqref{eq:decom_single}, \eqref{eq:decom_K*}, \eqref{eq:estimate_P} and \eqref{eq:estimate_Q} that
\begin{equation}\label{eq:control_T_s}
  \norm{\Tcal_{\delta}^{\omega}}_{\mathcal{L}(X,Y)}\leq C {\omega}.
\end{equation}

We have the following lemma,

\begin{lem}\label{lem:inverse_A0}
 For $\bh\in \mch$ and $\bg\in \mch^*$ with the following Fourier series representations,
\begin{equation}\label{eq:bg}
  \bg =\sum_{i=1}^{3} \sum_{n=1}^{\infty} \sum_{m=-n}^{n}  g_i^{n,m} \bkappa_i^{n,m} ,
\end{equation}
and
\begin{equation}\label{eq:bh}
  \bh=\sum_{i=1}^{3}\sum_{n=1}^{\infty} \sum_{m=-n}^{n}  h_i^{n,m} \bkappa_i^{n,m},
\end{equation}
the solutions to
\begin{equation}\label{eq:zero_frequency_system}
  \Acal_{\delta}
 \left[
   \begin{array}{c}
     \bvarphi \\
     \bpsi \\
   \end{array}
 \right]=
 \left[
   \begin{array}{c}
     \bh \\
     \bg \\
   \end{array}
 \right]
\end{equation}
are given by
\begin{equation}\label{eq:solution_varphi}
   \bvarphi=\sum_{i=1}^{3} \sum_{n=1}^{\infty} \sum_{m=-n}^{n}  \varphi_i^{n,m} \bkappa_i^{n,m},
\end{equation}
with
\begin{equation}
  \varphi_i^{n,m}= \frac{g_i^{n,m}- (1/2 + \xi_i^{n})h_i^{n,m}/(r_0 e_i^n)}{{-1/2 + \widetilde{\xi}_i^{n} -(1/2+ \xi_i^{n})\widetilde{e}_i^{n}/e_i^{n}}},
\end{equation}
and
\begin{equation}\label{eq:solution_psi}
  \bpsi=\Scal_{D}^{-1}\widetilde{\Scal}_{D}[\bvarphi] - \Scal_{D}^{-1}[\bh].
\end{equation}
\end{lem}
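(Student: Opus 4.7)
The plan is to diagonalize $\Acal_{\delta}$ using the spherical vector eigenbasis $\{\bkappa_i^{n,m}\}$ from Theorem \ref{thm:eigenfunction}. These functions are built out of the spherical harmonics $Y_n^m$ and the exterior unit normal on $\partial D$, so they do not depend on the Lam\'e parameters. Consequently they simultaneously diagonalize $\Scal_D,\Kcal_D^*$ and the corresponding tilted operators $\widetilde{\Scal}_D,\widetilde{\Kcal}_D^*$, with tilted eigenvalues $\widetilde{e}_i^n r_0$ and $\widetilde{\xi}_i^n$ obtained from \eqref{eq:eigenvalue} and \eqref{eq:coeff_eigenvalue_single} by replacing $(\lambda_0,\mu_0)$ with $(\widetilde{\lambda},\widetilde{\mu})$. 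I would record this briefly by noting that the kernel in \eqref{eq:fundamentalsolution_0} depends on the Lam\'e constants only through the scalar weights $\gamma_1,\gamma_2$, so the derivation of \eqref{eq:eigenfunctions_single} in \cite{DLL} transfers verbatim to the tilted setting.

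Next, I would substitute the ansatz $\bvarphi=\sum \varphi_i^{n,m}\bkappa_i^{n,m}$ and $\bpsi=\sum \psi_i^{n,m}\bkappa_i^{n,m}$ into \eqref{eq:zero_frequency_system}. Using the diagonalization together with the jump relation \eqref{eq:jump_single}, the operator equation decouples, mode by mode, into the independent $2\times 2$ linear system
$$
\begin{bmatrix} \widetilde{e}_i^n r_0 & -e_i^n r_0 \\ -\tfrac{1}{2}+\widetilde{\xi}_i^n & -\bigl(\tfrac{1}{2}+\xi_i^n\bigr) \end{bmatrix}\begin{bmatrix} \varphi_i^{n,m} \\ \psi_i^{n,m} \end{bmatrix}=\begin{bmatrix} h_i^{n,m} \\ g_i^{n,m} \end{bmatrix}.
$$
Eliminating $\psi_i^{n,m}$ from the first row and substituting into the second yields precisely the closed-form expression for $\varphi_i^{n,m}$ asserted in the lemma, provided the scalar denominator does not vanish; the vanishing of this denominator is exactly the resonance condition to be pursued in the subsequent section.

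For $\bpsi$, the identity $\bpsi=\Scal_D^{-1}\widetilde{\Scal}_D[\bvarphi]-\Scal_D^{-1}[\bh]$ follows immediately from the first transmission equation in \eqref{eq:solution_system} specialized at $\omega=0$, together with the invertibility of $\Scal_D:\mch^*\to\mch$ recalled before \eqref{eq:def_inner_h_star}. The main technical point is really the spectral step at the outset: verifying that $\widetilde{\Scal}_D$ and $\widetilde{\Kcal}_D^*$ share the basis $\{\bkappa_i^{n,m}\}$ with the anticipated tilted eigenvalues. Granting this, what remains is elementary linear algebra in each Fourier block, together with a routine $\mch^*$-convergence check that uses the orthonormality in Remark \ref{rem:21} and the boundedness of the mode multipliers $(1/2+\xi_i^n)/e_i^n$ and $\widetilde{e}_i^n/e_i^n$ as $n\to\infty$.
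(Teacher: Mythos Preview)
Your proposal is correct and follows essentially the same approach as the paper: both exploit that the basis $\{\bkappa_i^{n,m}\}$ simultaneously diagonalizes $\Scal_D,\Kcal_D^*,\widetilde{\Scal}_D,\widetilde{\Kcal}_D^*$, reducing \eqref{eq:zero_frequency_system} to decoupled scalar equations. The only cosmetic difference is the order of operations---the paper first eliminates $\bpsi$ at the operator level via $\Scal_D^{-1}$ and then expands in the eigenbasis, whereas you expand first and eliminate $\psi_i^{n,m}$ mode by mode---but the resulting computation is identical.
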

\begin{proof}
 It is directly verified that \eqref{eq:zero_frequency_system} is equivalent to the following system of integral equations,
\begin{equation}\label{eq:system_without_frequency}
  \left\{
    \begin{array}{ll}
      \widetilde{\Scal}_{D}[\bvarphi]-\Scal_{D}[\bpsi] =\bh, \\
      (-1/2I+\widetilde{\Kcal}_D^*)[\bvarphi] + (-1/2I-\Kcal_D^*)[\bpsi]=\bg,
    \end{array}
  \right.
 \quad \mbox{on} \quad \partial D
\end{equation}
Since the operator $\Scal_{D}:$ $\mch^* \rightarrow \mch$ is invertible in three dimensions \cite{AJKKY15}, from the first equation in \eqref{eq:system_without_frequency} one can obtain
\begin{equation}\label{eq:rep_psi}
  \bpsi=\Scal_{D}^{-1}\widetilde{\Scal}_{D}[\bvarphi] - \Scal_{D}^{-1}[\bh].
\end{equation}
Substituting \eqref{eq:rep_psi} into the second equation of \eqref{eq:system_without_frequency} yields
\begin{equation}\label{eq:without_frequency}
  \left( -1/2I+\widetilde{\Kcal}_D^* - (1/2I + \Kcal_D^*)\Scal_{D}^{-1}\widetilde{\Scal}_{D} \right)[\bvarphi]=\bg-(1/2+\Kcal_D^*) \Scal_{D}^{-1}[\bh].
\end{equation}
Since the eigenfunctions given in \eqref{eq:eigenfunctions} are complete on $L^2(S)$, the density function $\bvarphi$ can be represented as follows
\begin{equation}
  \bvarphi=\sum_{i=1}^{3} \sum_{n=1}^{\infty} \sum_{m=-n}^{n}  \varphi_i^{n,m} \bkappa_i^{n,m}.
\end{equation}
Next we calculate the coefficient $\varphi_i^{n,m}$.
By Remark~\ref{rem:21}, since $\bkappa_i^{n,m}$ are also the eigenfunctions for the operator $\Scal_D$ which is invertible as mentioned above, with the representation of $\bh$ given in \eqref{eq:bh}, one can obtain from \eqref{eq:eigenfunctions_single} that
\begin{equation}\label{eq:bh_inves}
  \Scal_{D}^{-1}[\bh] = \sum_{i=1}^{3}  \sum_{n=1}^{\infty} \sum_{m=-n}^{n} \frac{h_i^{n,m}}{e_i^n r_0} \bkappa_i^{n,m} ,
\end{equation}
where $e_i^n$ is given in \eqref{eq:coeff_eigenvalue_single}. Substituting \eqref{eq:bg}, \eqref{eq:solution_varphi} and \eqref{eq:bh_inves} into \eqref{eq:without_frequency} and using \eqref{eq:eigenvalue} and \eqref{eq:eigenfunctions_single}, one can obtain that
\begin{equation}\label{eq:coeff_varphi_inm}
  \varphi_i^{n,m}= \frac{g_i^{n,m}- (1/2 + \xi_i^{n})h_i^{n,m}/(r_0 e_i^n)}{-1/2 + \widetilde{\xi}_i^{n} -(1/2+ \xi_i^{n})\widetilde{e}_i^{n}/e_i^{n}}.
\end{equation}
This completes the proof.
\end{proof}

As a consequence of Lemma \ref{lem:inverse_A0}, we obtain the following critical result.
\begin{thm}\label{cor_property_A}
  Let $(\bvarphi,\bpsi)$ be the solution of \eqref{eq:zero_frequency_system}. Then the following hold for sufficiently small $\delta$.
\begin{enumerate}
  \item $\norm{(\Acal_{\delta})^{-1}}_{\mathcal{L}(Y,X)}\leq  C\delta^{-1}$.

  \item If
\begin{equation}
  \epsilon_2=c_1^n,
\end{equation}
for $n\in\mathbb{N}$ and $n\geq 2$ with
\begin{equation}\label{eq:choice1}
  c_1^n:=-\frac{n+2}{n-1},
\end{equation}
then $\norm{\bvarphi}_{\mch^*}\geq C |\varphi_1^{n,m}|\delta^{-1}$.

If
\begin{equation}
  \epsilon_2=c_{2,1}^n,
\end{equation}
for $n\in \mathbb{N}$ with
\begin{equation}
  c_{2,1}^n:=-\frac{\epsilon_1 (n+1)\lambda_0}{(3n+2)\mu_0};
\end{equation}
or if
\begin{equation}
  \epsilon_2=c_{2,2}^n,
\end{equation}
for $n\in\mathbb{N}$ and $n\geq 2$ with
\begin{equation}
  c_{2,2}^n:=-\frac{(2n^2+1)\lambda_0+(2n^2+2n+2)\mu_0}{2(n-1)((n+1)\lambda_0+(3n+2)\mu_0)},
\end{equation}
then $\norm{\bvarphi}_{\mch^*}\geq C |\varphi_2^{n,m}|\delta^{-1}$.

 If
\begin{equation}
  \epsilon_1=c_3^n
\end{equation}
for $n\in\mathbb{N}$ with
\begin{equation}
  c_3^n:=-\frac{2\epsilon_2((n^2-n+1)\epsilon_2 + 3n^2+n-2)\mu_0}{((2n^2+1)\epsilon_2+2n^2-2)\lambda_0},
\end{equation}
then $\norm{\bvarphi}_{\mch^*}\geq C |\varphi_3^{n,m}|\delta^{-1}$.

\item If $\epsilon_2\neq c_1^n$, $\epsilon_1\neq c_3^n$ for all $n$, and $\epsilon_2\neq c_{2,j}^n$, for all $n$ and $j=1,2$, then $\norm{(A_{\delta})^{-1}}_{\mathcal{L}(Y,X)}\leq C$ for some $C$.
\end{enumerate}
\end{thm}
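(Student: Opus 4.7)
The plan is to leverage the explicit Fourier representation from Lemma~\ref{lem:inverse_A0}, in which each coefficient of $\bvarphi$ has the form
\[
\varphi_i^{n,m} \;=\; \frac{N_i^{n,m}(\bh,\bg)}{D_i^n(\widetilde\lambda,\widetilde\mu)}, \qquad D_i^n := -\tfrac{1}{2}+\widetilde{\xi}_i^n-\bigl(\tfrac{1}{2}+\xi_i^n\bigr)\widetilde{e}_i^n/e_i^n,
\]
where $N_i^{n,m}$ is a bounded linear functional of the data. Since the $\{\bkappa_i^{n,m}\}$ are $\mch^*$-orthonormal and $\bpsi$ is recovered from $\bvarphi$ by the bounded operation \eqref{eq:solution_psi}, the entire theorem reduces to a scalar analysis of the denominators $D_i^n$ as functions of $\widetilde\lambda=(\epsilon_1+\mathrm{i}\delta)\lambda_0$ and $\widetilde\mu=(\epsilon_2+\mathrm{i}\delta)\mu_0$ for small $\delta$.

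For part~(2), I would substitute the closed-form expressions \eqref{eq:eigenvalue} and \eqref{eq:coeff_eigenvalue_single} (with tilded versions obtained by replacing $(\lambda_0,\mu_0)$ by $(\widetilde\lambda,\widetilde\mu)$) and clear denominators, so that each equation $D_i^n(\delta{=}0)=0$ becomes a polynomial identity in $(\epsilon_1,\epsilon_2)$ with coefficients depending on $(\lambda_0,\mu_0,n)$. For $i=1$, since $\xi_1^n$ and $e_1^n$ involve only $\mu_0$, this identity is linear in $\epsilon_2$ and should yield the unique root $\epsilon_2=c_1^n=-(n+2)/(n-1)$, valid only for $n\ge 2$ (the case $n=1$ is excluded in accordance with Remark~\ref{rem:22}). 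For $i=2$, the identity factors into two branches, giving $c_{2,1}^n$ (a linear relation between $\epsilon_1$ and $\epsilon_2$) and $c_{2,2}^n$ (an explicit value of $\epsilon_2$). For $i=3$, solving the identity for $\epsilon_1$ produces $c_3^n(\epsilon_2)$. On each critical locus, I would then Taylor-expand
\[
D_i^n(\delta) \;=\; \mathrm{i}\delta\bigl(\lambda_0\,\partial_{\widetilde\lambda}D_i^n+\mu_0\,\partial_{\widetilde\mu}D_i^n\bigr)\bigr|_{\delta=0} + O(\delta^2),
\]
check that the first-order coefficient does not vanish, and conclude $|D_i^n(\delta)|\gtrsim \delta$. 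This forces $|\varphi_i^{n,m}|\gtrsim \delta^{-1}$ for any data making $N_i^{n,m}$ order one, yielding the stated lower bound on $\|\bvarphi\|_{\mch^*}$.

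Parts~(1) and~(3) are the complementary uniform-in-$n$ statements. Here I would extract the large-$n$ asymptotics of $\xi_i^n, e_i^n$ from \eqref{eq:eigenvalue}--\eqref{eq:coeff_eigenvalue_single}, from which $D_i^n$ converges to a finite limit $D_i^\infty(\epsilon_1,\epsilon_2)$; under the background convexity on $(\lambda_0,\mu_0)$ one expects $D_i^\infty\neq 0$, so only finitely many $n$ can produce small $|D_i^n(0)|$, and each such $n$ is governed by one of the explicit critical values $c_1^n, c_{2,j}^n, c_3^n$. For part~(3), no critical identity holds, so $\inf_{i,n}|D_i^n(0)|>0$, and continuity in $\delta$ gives $|D_i^n(\delta)|\ge C$, proving $\|(\Acal_\delta)^{-1}\|_{\mathcal{L}(Y,X)}\le C$ via Parseval. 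For part~(1), even when a critical identity holds at some $(i_0,n_0)$, the first-order expansion ensures $|D_{i_0}^{n_0}(\delta)|\ge C\delta$ while the remaining denominators are bounded below by a positive constant; summing via Parseval yields the uniform $\delta^{-1}$ bound.

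The main technical obstacle is the algebra in part~(2), especially factoring the $i=2$ identity into precisely the two branches $c_{2,1}^n$ and $c_{2,2}^n$, and correctly isolating the exceptional low-order modes flagged by Remark~\ref{rem:22} (where certain traction terms vanish and the reduction to $D_i^n$ degenerates). A subtler point is verifying nonvanishing of the first-order $\delta$-coefficient of $D_i^n$ on each critical locus; if it ever vanished, the resonance rate would be strictly weaker than $\delta^{-1}$ and the upper bound in part~(1) would not be sharp. Once these two algebraic checks are completed, the rest of the argument is essentially Parseval bookkeeping against the orthonormal basis $\{\bkappa_i^{n,m}\}$.
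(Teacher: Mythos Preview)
Your proposal is correct and follows essentially the same route as the paper: both reduce everything to the scalar denominators $D_i^n=-\tfrac12+\widetilde\xi_i^n-(\tfrac12+\xi_i^n)\widetilde e_i^n/e_i^n$ via the Fourier representation of Lemma~\ref{lem:inverse_A0}, identify the critical values $c_1^n,c_{2,1}^n,c_{2,2}^n,c_3^n$ as the zeros of $D_i^n|_{\delta=0}$, and recover the operator bounds by Parseval against the orthonormal basis $\{\bkappa_i^{n,m}\}$. Your write-up is in fact more explicit than the paper's (which dispatches the key inequalities as ``straightforward though tedious calculations'' and handles the $i,n$ cases one by one without spelling out the Taylor expansion in $\delta$ or the large-$n$ uniformity), and your flagging of the exceptional modes from Remark~\ref{rem:22} and of the need to check nonvanishing of the first-order $\delta$-coefficient are exactly the points the paper leaves implicit.
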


\begin{proof}
By straightforward though tedious calculations, one can show that
\begin{equation}
  \frac{1}{|-1/2 + \widetilde{\xi}_i^{n} -(1/2+ \xi_i^{n})\widetilde{e}_i^{n}/e_i^{n}| }\leq C \delta^{-1}.
\end{equation}
Moreover, from \eqref{eq:solution_varphi} there holds
\begin{equation}
\begin{split}
  \norm{\bvarphi}_{\mch^*}^2 & \leq C \delta^{-2} \sum_{i=1}^{3} \sum_{n=1}^{\infty} \sum_{m=-n}^{n}    |g_i^{n,m}- (1/2 + \xi_i^{n})h_i^{n,m}/(r_0 e_i^n)|^2 \\
    & \leq C \delta^{-2}(\norm{\bh}_{\mch}^2 + \norm{\bg}_{\mch^*}^2).
\end{split}
\end{equation}
Using \eqref{eq:solution_psi}, one can then obtain that
\begin{equation}
  \norm{\bpsi}_{\mch^*}^2 \leq C  \delta^{-2}(\norm{\bh}_{\mch}^2 + \norm{\bg}_{\mch^*}^2).
\end{equation}
Therefore we have shown $(1)$ in the theorem.

Next we give the proof of the result stated in $(2)$. If $i=1$ and $n=1$, one can show that
\begin{equation}
   \frac{1}{|-1/2 + \widetilde{\xi}_1^{1} -(1/2+ \xi_1^{1})\widetilde{e}_1^{1}/e_1^{1} |} \leq C.
\end{equation}
If $i=1$ and $n\geq 2$, by setting $\epsilon_2=c_1^n$, one has that
\[
 \frac{1}{|-1/2 + \widetilde{\xi}_1^{n} -(1/2+ \xi_1^{n})\widetilde{e}_1^{n}/e_1^{n} |}\geq C \delta^{-1}.
\]
When $i=2$ and $n=1$, only if $\epsilon_2=c_{2,1}^1$, there holds that
\[
\frac{1}{|-1/2 + \widetilde{\xi}_2^{1} -(1/2+ \xi_2^{1})\widetilde{e}_2^{1}/e_2^{1} |}\geq C \delta^{-1}.
\]
When $i=2$ and $n\geq 2$, either $\epsilon_2=c_{2,1}^n$ or $\epsilon_2=c_{2,2}^n$ can ensure the following inequality hold
\[
\frac{1}{|-1/2 + \widetilde{\xi}_2^{n} -(1/2+ \xi_2^{n})\widetilde{e}_2^{n}/e_2^{n} |}\geq C \delta^{-1}.
\]
When $i=3$ and $n\geq 1$, if $\epsilon_1=c_3^n$, then there holds
\[
\frac{1}{|-1/2 + \widetilde{\xi}_3^{n} -(1/2+ \xi_3^{n})\widetilde{e}_3^{n}/e_3^{n} |}\geq C \delta^{-1}.
\]
Therefore we have
\begin{equation}
  \norm{\bvarphi}_{\mch^*} \geq C |\varphi_i^{n,m}|\delta^{-1},
\end{equation}
where $i$ and $n$ depend on the values of $\epsilon_1$ and $\epsilon_2$. Hence, $(2)$ is proved.

If $\epsilon_2\neq c_1^n$, $\epsilon_1\neq c_3^n$ for all $n$, and $\epsilon_2\neq c_{2,j}^n$, for all $n$ and $j=1,2$, then one can obtain
\begin{equation}
  \frac{1}{|-1/2 + \widetilde{\xi}_i^{n} -(1/2+ \xi_i^{n})\widetilde{e}_i^{n}/e_i^{n} |}\leq C.
\end{equation}
Hence
\begin{equation}
\begin{split}
  \norm{\bvarphi}_{\mch^*}^2 & \leq C \sum_{i=1}^{3} \sum_{n=1}^{\infty} \sum_{m=-n}^{n}    |g_i^{n,m}- (1/2 + \xi_i^{n})h_i^{n,m}/(r_0 e_i^n)|^2 \\
    & \leq C (\norm{\bh}_{\mch}^2 + \norm{\bg}_{\mch^*}^2),
\end{split}
\end{equation}
and
\begin{equation}
  \norm{\bpsi}_{\mch^*}^2 \leq C  (\norm{\bh}_{\mch}^2 + \norm{\bg}_{\mch^*}^2).
\end{equation}
This proves $(3)$.

The proof is complete.
\end{proof}

 Next we give the estimate of $ E(\bu_{\delta})$ as $\delta\rightarrow +0$. By direct calculations and Green's formula, one has that
\begin{equation}
 \begin{split}
   E(\bu_{\delta}) & = E(\widetilde{\Scal}[\bvarphi_{\delta}]) =  \delta\int_{D} \widehat{\nabla}\widetilde{\Scal}_D^{\omega}[\bvarphi_{\delta}]:\mathbf{C}_0\overline{\widehat{\nabla}\widetilde{\Scal}_D^{\omega}[\bvarphi_{\delta}]}d\bx\\
     & = \Im\left( \int_{\partial D} \partial_{\bnu}\widetilde{\Scal}_D^{\omega}[\bvarphi_{\delta}]|_- \cdot \overline{\widetilde{\Scal}_D^{\omega}[\bvarphi_{\delta}]} ds(\bx)\right) - \int_{D} \mathcal{L}_{\lambda_0, \mu_0} \widetilde{\Scal}_D^{\omega}[\bvarphi_{\delta}] \cdot \overline{\widetilde{\Scal}_D^{\omega}[\bvarphi_{\delta}]}d\bx\\
     & =\Im\left( \int_{\partial D} \partial_{\bnu}\widetilde{\Scal}_D^{\omega}[\bvarphi_{\delta}]|_- \cdot \overline{\widetilde{\Scal}_D^{\omega}[\bvarphi_{\delta}]} ds(\bx)\right) + \omega^2 \int_{D} |\widetilde{\Scal}_D^{\omega}[\bvarphi_{\delta}]|^2 d\bx.
 \end{split}
\end{equation}
The last equality follows from
\begin{equation}
   \mathcal{L}_{\lambda_0, \mu_0} \widetilde{\Scal}_D^{\omega}[\bvarphi_{\delta}] + \omega^2 \widetilde{\Scal}_D^{\omega}[\bvarphi_{\delta}]=0.
\end{equation}
One can see from \eqref{eq:decom_single} and \eqref{eq:estimate_P} that
\begin{equation}
  \int_{D} |\widetilde{\Scal}_D^{\omega}[\bvarphi_{\delta}]|^2 d\bx\leq C \norm{\bvarphi_{\delta}}_{\mch^*}^2.
\end{equation}
By the jump relation \eqref{eq:jump_single}, one further has that
\begin{equation}
   \int_{\partial D} \partial_{\bnu}\widetilde{\Scal}_D^{\omega}[\bvarphi_{\delta}]|_- \cdot \overline{\widetilde{\Scal}_D^{\omega}[\bvarphi_{\delta}]} ds(\bx)=  \int_{\partial D} \left(- \frac{1}{2}I + (\widetilde{\Kcal}^\omega_D)^*\right)[\bvarphi_{\delta}] \cdot \overline{\widetilde{\Scal}_D^{\omega}[\bvarphi_{\delta}]}ds(\bx).
\end{equation}
From \eqref{eq:decom_single} and \eqref{eq:decom_K*}, one can find that
\begin{equation}\label{eq:energy_decom}
  \int_{\partial D} \partial_{\bnu}\widetilde{\Scal}_D^{\omega}[\bvarphi_{\delta}]|_- \cdot \overline{\widetilde{\Scal}_D^{\omega}[\bvarphi_{\delta}]}ds(\bx)= \int_{\partial D} \left(- \frac{1}{2}I + \widetilde{\Kcal}_D^*\right)[\bvarphi_{\delta}] \cdot \overline{\widetilde{\Scal}_D[\bvarphi_{\delta}]} ds(\bx) +\mathcal{A},
\end{equation}
where
\begin{equation}
\begin{split}
 \mathcal{A}=& \int_{\partial D} \left(\omega \widetilde{\Qcal}^{\omega}[\bvarphi_\delta]\right) \cdot \overline{\widetilde{\Scal}_D[\bvarphi_{\delta}]} ds(\bx)\\
 & + \int_{\partial D} \left(- \frac{1}{2}I + \widetilde{\Kcal}_D^* +\omega \widetilde{\Qcal}^{\omega}\right)[\bvarphi_{\delta}]\cdot \overline{\left(\omega \widetilde{\Pcal}^{\omega}[\bvarphi_{\delta}]\right) }ds(\bx).
\end{split}
\end{equation}
Since $\widetilde{\Kcal}_D^*$ is bounded on $H^{-1/2}(\partial D)^3$, one can deduce as follows
\begin{equation}\label{eq:energy_control_A}
 \begin{split}
   |\mathcal{A}| \leq & C \omega \Big( \norm{\widetilde{\Scal}_D[\bvarphi_{\delta}]}_{H^{-1/2}} \norm{ \widetilde{\Qcal}^{\omega}[\bvarphi_\delta]}_{H^{1/2}} + \\
   & \norm{\widetilde{\Pcal}^{\omega}[\bvarphi_{\delta}]}_{H^{1/2}}\left(C\norm{\bvarphi_\delta}_{H^{-1/2}} +\norm{\omega \widetilde{\Qcal}^{\omega}[\bvarphi_{\delta}]}_{H^{-1/2}}\right)    \Big ) \\
    \leq & C \omega\norm{\bvarphi_\delta}_{H^{-1/2}}^2.
 \end{split}
\end{equation}
Using the fact that the eigenfunctions given in Theorem \ref{thm:eigenfunction} are complete on $\mch^*$, the function $\bvarphi_{\delta}$ has the following Fourier representation
\begin{equation}
  \bvarphi_{\delta} =  \sum_{i=1}^{3}\sum_{n=1}^{\infty} \sum_{m=-n}^{n} \varphi_{\delta,i}^{n,m} \bkappa_i^{n,m}.
\end{equation}
Then one can conclude that
\begin{equation}
 \begin{split}
     &\Im\left( \int_{\partial D} \left(- \frac{1}{2}I + \widetilde{\Kcal}_D^*\right)[\bvarphi_{\delta}] \cdot \overline{\widetilde{\Scal}_D[\bvarphi_{\delta}]} ds(\bx)\right) \\
   = &\Im\left( \sum_{i=1}^{3}\sum_{n=1}^{\infty} \sum_{m=-n}^{n}  |\varphi_{\delta,i}^{n,m}|^2 \overline{\widetilde{e_i^n}} \left(-1/2+\widetilde{\xi_i^n}\right)  \right),
 \end{split}
\end{equation}
where $\widetilde{\xi_i^n}, \widetilde{e_i^n}, n\geq 1, i=1,2,3$ are the eigenvalues the of the operators $\widetilde{\Kcal}^*_D$ and $\widetilde{\Scal}_D$ corresponding to the parameters $(\widetilde{\lambda},\widetilde{\mu})$, respectively. Next we give the estimate of $\Im\left(\overline{\widetilde{e_i^n}} \left(-1/2+\widetilde{\xi_i^n}\right)\right)$ for $i=1,2,3$.
When $i=1$ and $n\geq 1$, one has that
\begin{equation}\label{eq:estimate_eff_1}
  \Im\left(\overline{\widetilde{e_1^n}} \left(-1/2+\widetilde{\xi_1^n}\right)\right) = \Im\left( \frac{n-1}{(2n+1)^2\overline{\widetilde{\mu}}} \right)= \frac{(n-1)}{(2n+1)^2(\epsilon_1^2+\delta^2)\mu_0} \delta.
\end{equation}
When $i=2$ and $n\geq 1$, one can derive that
\begin{equation}\label{eq:estimate_eff_2}
  \Im\left(\overline{\widetilde{e_2^n}} \left(-1/2+\widetilde{\xi_2^n}\right)\right) = \Im \left( 2(n-1)|\widetilde{e_2^n} \widetilde{\mu}|^2\frac{1}{\overline{\widetilde{\mu}}} \right) = \frac{2(n-1)|\widetilde{e_2^n} \widetilde{\mu}|^2 }{(\epsilon_1^2 + \delta^2)\mu_0} \delta.
\end{equation}
When $i=3$ and $n\geq 1$, there holds
\begin{equation}\label{eq:estimate_eff_3}
    \Im\left(\overline{\widetilde{e_2^n}} \left(-1/2+\widetilde{\xi_2^n}\right)\right)  = (\delta \mu_0 d_1 + \delta^3\mu_0 d_2)/d_3,
\end{equation}
where
\begin{equation}
\begin{split}
  d_1=& 4\epsilon_1\epsilon_2\lambda_0\mu_0(n^3-2n^2+2n-1)+\epsilon_1^2\lambda_0^2(2n^3-2n^2+n-1) \\
      & + \epsilon_2^2\mu_0(\lambda_0(4n^2-1)n + 2\mu_0(3n^3-5n^2+5n-2)),
\end{split}
\end{equation}
\begin{equation}
  d_2=(\lambda_0(n-1)+\mu_0(3n-2))(\lambda_0(2n^2+1) + 2\mu_0(n^2-n+1)),
\end{equation}
and
\begin{equation}
  d_3=|(\widetilde{\lambda}+2\widetilde{\mu})(4n^2-1)|^2.
\end{equation}
Since $\delta$ is sufficient small, only the contribution of $\delta \mu_0 d_1/d_3$ should be taken into consideration. Using the strong convexity condition given in \eqref{eq:convex}, one can conclude that, when $n\geq 1$,
\begin{equation}
  d_1>0,
\end{equation}
which is independent of the values of $\epsilon_1$ and $\epsilon_2$. From \eqref{eq:estimate_eff_1}, \eqref{eq:estimate_eff_2}, one can obtain that when $n=1$,
\begin{equation}
  \Im\left(\overline{\widetilde{e_1^1}} \left(-1/2+\widetilde{\xi_1^1}\right)\right) = \Im\left(\overline{\widetilde{e_2^1}} \left(-1/2+\widetilde{\xi_2^1}\right)\right)=0.
\end{equation}
Thus we decompose $\bvarphi_{\delta}$ into two parts, namely
\begin{equation}\label{eq:solution_decom}
  \bvarphi_{\delta}=\bvarphi_{\delta}^{\prime}+\bvarphi_{\delta}^{\prime\prime},
\end{equation}
 where
 \[
 \bvarphi_{\delta}^{\prime\prime} =\sum_{i=1}^2 \sum_{m=-1}^{1} \varphi_{\delta,i}^{1,m} \bkappa_i^{1,m},
 \]
 and
 \[
 \bvarphi_{\delta}^{\prime}= \bvarphi_{\delta}-\bvarphi_{\delta}^{\prime\prime}.
 \]
 From \eqref{eq:estimate_eff_1}, \eqref{eq:estimate_eff_2} and \eqref{eq:estimate_eff_3}, together with \eqref{eq:solution_decom}, there holds that
\begin{equation}\label{eq:energy_main}
  \Im\left( \int_{\partial D} \left(- \frac{1}{2}I + \widetilde{\Kcal}_D^*\right)[\bvarphi_{\delta}] \cdot \overline{\widetilde{\Scal}_D[\bvarphi_{\delta}]} ds(\bx)\right) \approx \delta \norm{\bvarphi_{\delta}^{\prime}}_{\mch^*}^2.
\end{equation}
Combining \eqref{eq:energy_decom}, \eqref{eq:energy_control_A} and \eqref{eq:energy_main}, one has that
\begin{equation}\label{eq:estimate_energy}
  (\delta-\omega)\norm{\bvarphi_{\delta}^{\prime}}_{\mch^*}^2 -\omega \norm{\bvarphi_{\delta}^{\prime\prime}}_{\mch^*}^2 \leq E(\widetilde{\Scal}[\bvarphi_{\delta}]) \leq (\delta+\omega)\norm{\bvarphi_{\delta}^{\prime}}_{\mch^*}^2 + \omega \norm{\bvarphi_{\delta}^{\prime\prime}}_{\mch^*}^2.
\end{equation}

We are ready to present one of the main results of the current article on the novel plasmonic structures and the corresponding resonances.

\begin{thm}\label{thm_resonance}
  Assume $\omega\delta^{-1}\leq h_0$ for a sufficiently small $h_0\in\mathbb{R}_+$. Let $\bu_{\delta}$ be the solution to \eqref{eq:problem_system} corresponding to $\widetilde{\mathbf{C}}$ and $\mathbf{f}$ with $(\widetilde\lambda,\widetilde\mu)$ defined in \eqref{eq:parame_inside}.

 \begin{enumerate}
   \item[(i)] If $\epsilon_2=c_1^n$ or $c_{2,1}^n$ or $c_{2,2}^n$ with $n\geq 2$ or $\epsilon_1=c_3^n$ with $n\geq 1$, where $c_1^n, c_{2,1}^n, c_{2,2}^n$ and $c_3^n$ are given in statement $(2)$ of Theorem \ref{cor_property_A}, then
\begin{equation}
  E(\bu_{\delta})\approx \delta^{-1}
\end{equation}
 as $\delta\rightarrow +0$. That means, resonance occurs with the plasmonic structures described above.

  \item[(ii)]  If $\epsilon_2\neq c_1^n$, $\epsilon_1\neq c_3^n$ for all $n$, and $\epsilon_2\neq c_{2,j}^n$, for all $n$ and $j=1,2$,, then there exists a positive constant $C$ independent of $\delta$ such that
 \begin{equation}
    E(\bu_{\delta})\leq C.
\end{equation}
That means, resonance does not occur with the plasmonic structures described above.
 \end{enumerate}
\end{thm}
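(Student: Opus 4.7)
The plan is to reduce both assertions to two-sided estimates on the density $\bvarphi_\delta$ in $\mch^*$, by invoking the energy sandwich
\begin{equation*}
(\delta-\omega)\norm{\bvarphi_\delta^{\prime}}_{\mch^*}^2 - \omega\norm{\bvarphi_\delta^{\prime\prime}}_{\mch^*}^2 \leq E(\bu_\delta) \leq (\delta+\omega)\norm{\bvarphi_\delta^{\prime}}_{\mch^*}^2 + \omega\norm{\bvarphi_\delta^{\prime\prime}}_{\mch^*}^2
\end{equation*}
derived just above the statement, together with the assumption $\omega\delta^{-1}\leq h_0$ which turns both sides into tight bounds in $\delta$. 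The whole proof will then hinge on inverting the boundary system $\Acal_\delta^\omega[\bvarphi_\delta,\bpsi_\delta]^t=[\mathbf{F},\partial_\bnu\mathbf{F}]^t$ and reading off the resonant Fourier modes.

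The first step is to invert $\Acal_\delta^\omega=\Acal_\delta+\Tcal_\delta^\omega$ by a Neumann series. By \eqref{eq:control_T_s} one has $\norm{\Tcal_\delta^\omega}_{\mathcal{L}(X,Y)}\leq C\omega$, and by statement (1) of Theorem \ref{cor_property_A} one has $\norm{(\Acal_\delta)^{-1}}_{\mathcal{L}(Y,X)}\leq C\delta^{-1}$. Hence $\norm{(\Acal_\delta)^{-1}\Tcal_\delta^\omega}\leq C\omega\delta^{-1}\leq Ch_0$, and choosing $h_0$ small enough gives the convergent expansion $(\Acal_\delta^\omega)^{-1}=(I+(\Acal_\delta)^{-1}\Tcal_\delta^\omega)^{-1}(\Acal_\delta)^{-1}$. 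Consequently $\bvarphi_\delta$ is a small perturbation of the quasi-static density supplied by Lemma \ref{lem:inverse_A0}, and each Fourier coefficient $\varphi_{\delta,i}^{n,m}$ agrees with the explicit $\varphi_i^{n,m}$ of \eqref{eq:coeff_varphi_inm} up to a relative error of size $\mathcal{O}(h_0)$. Part (ii) is then immediate: the hypotheses of (ii) combined with statement (3) of Theorem \ref{cor_property_A} give $\norm{(\Acal_\delta)^{-1}}\leq C$, hence $\norm{\bvarphi_\delta}_{\mch^*}\leq C$, and the upper energy inequality yields $E(\bu_\delta)\leq C(\delta+\omega)\leq C$. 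The upper half of (i), namely $E(\bu_\delta)\leq C\delta^{-1}$, follows along the same route with the weaker bound $\norm{\bvarphi_\delta}_{\mch^*}\leq C\delta^{-1}$ from statement (1) of Theorem \ref{cor_property_A}.

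For the matching lower bound $E(\bu_\delta)\gtrsim\delta^{-1}$ in (i), the plan is to single out the resonant Fourier mode. Under any one of the critical conditions $\epsilon_2=c_1^n$, $c_{2,1}^n$, $c_{2,2}^n$, or $\epsilon_1=c_3^n$, the denominator $-1/2+\widetilde{\xi}_i^n-(1/2+\xi_i^n)\widetilde{e}_i^n/e_i^n$ in \eqref{eq:coeff_varphi_inm} is of order $\delta$, so the associated coefficient $\varphi_{\delta,i}^{n,m}$ carries magnitude at least $C\delta^{-1}$ times the relevant coefficient of the datum $[\mathbf{F},\partial_\bnu\mathbf{F}]$. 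For the indices prescribed in (i), namely $n\geq 2$ for $i=1,2$ and $n\geq 1$ for $i=3$, the resonant mode lives in $\bvarphi_\delta^{\prime}$, and the lower estimates \eqref{eq:estimate_eff_1}--\eqref{eq:estimate_eff_3} already establish $\Im(\overline{\widetilde{e_i^n}}(-1/2+\widetilde{\xi_i^n}))\geq C\delta$ for those same indices. This yields $\norm{\bvarphi_\delta^{\prime}}_{\mch^*}^2\geq C\delta^{-2}$; combined with the easy control $\omega\norm{\bvarphi_\delta^{\prime\prime}}_{\mch^*}^2\leq Ch_0\delta^{-1}$, the lower energy inequality finally produces $E(\bu_\delta)\geq C\delta^{-1}$ after $h_0$ is fixed small enough.

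The main obstacle I anticipate lies in the delicate interplay between the $n=1$, $i=1,2$ modes — which make up $\bvarphi_\delta^{\prime\prime}$, have vanishing traction contribution by Remark \ref{rem:22}, and produce no dissipation at leading order — and the rest of the spectrum. One has to check that the $\Tcal_\delta^\omega$-perturbation never transfers a macroscopic amount of amplitude between $\bvarphi_\delta^{\prime}$ and $\bvarphi_\delta^{\prime\prime}$ in a way that could spoil the dichotomy. This is precisely the role of the smallness assumption $\omega\delta^{-1}\leq h_0$: it keeps the Neumann expansion essentially diagonal in the basis $\{\bkappa_i^{n,m}\}$, so that the truly resonant modes can be isolated from the neutral ones and the two-sided energy estimate survives intact.
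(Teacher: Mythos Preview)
Your proposal is correct and follows essentially the same route as the paper's own proof: Neumann-series inversion of $\Acal_\delta^\omega=\Acal_\delta(I+(\Acal_\delta)^{-1}\Tcal_\delta^\omega)$ using \eqref{eq:control_T_s} and Theorem~\ref{cor_property_A}(1), then the energy sandwich \eqref{eq:estimate_energy} combined with parts (2) and (3) of Theorem~\ref{cor_property_A} to read off the resonant and non-resonant cases. The only minor differences are that you spell out the upper bound $E(\bu_\delta)\lesssim\delta^{-1}$ in (i) explicitly (the paper leaves it implicit), and your mode-by-mode ``relative error $\mathcal{O}(h_0)$'' claim is slightly stronger than the norm-level perturbation estimate $\norm{\Phi_\delta-\Acal_\delta^{-1}[\mathcal{F}]}_X\leq Ch_0\norm{\Acal_\delta^{-1}[\mathcal{F}]}_X$ the paper actually uses---but the latter is all that is needed, and your argument goes through with it verbatim.
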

\begin{proof}
  Since $\Acal_{\delta}^{\omega}= \Acal_{\delta} + \Tcal_{\delta}^{\omega} = \Acal_{\delta}(I + (\Acal_{\delta})^{-1}\Tcal_{\delta}^{\omega} )$, it follows from \eqref{eq:equation_system} that
\begin{equation}
  \Phi_{\delta}=(I + (\Acal_{\delta})^{-1}\Tcal_{\delta}^{\omega} )^{-1}\Acal_{\delta}^{-1}[\mathcal{F}],
\end{equation}
where
\begin{equation}
  \Phi_{\delta}=
\left[
   \begin{array}{c}
     \bvarphi_{\delta} \\
     \bpsi_{\delta} \\
   \end{array}
 \right]
\quad \mbox{and} \quad
\mathcal{F}=
\left[
  \begin{array}{c}
    \mathbf{F} \\
    \partial_{\bnu}\mathbf{F} \\
  \end{array}
\right].
\end{equation}
We see from \eqref{eq:control_T_s} and the statement (1) in Theorem \ref{cor_property_A} that
\begin{equation}
  \norm{(\Acal_{\delta})^{-1}\Tcal_{\delta}^{\omega}}_{\mathcal{L}(X)}\leq C\delta^{-1}\omega.
\end{equation}
Hence, if $\delta^{-1}\omega$ is sufficiently small, then there holds
\begin{equation}\label{eq:estimate_im}
  \norm{\Phi_{\delta}-\Acal_{\delta}^{-1}[\mathcal{F}]}_{X} \leq C \delta^{-1}\omega \norm{\Acal_{\delta}^{-1}[\mathcal{F}]}_{X}.
\end{equation}

Suppose that $\epsilon_1$ and $\epsilon_2$ are given in $(i)$ of the theorem. Let $(\Acal_{\delta})^{-1}[\mathcal{F}]=(\bvarphi_1,\bpsi_1)^{T}$. Then (2) in Theorem~\ref{cor_property_A} shows that
\begin{equation}
  \norm{\bvarphi_1^{\prime}}_{\mch^*}\geq C |\varphi_i^{n,m}|\delta^{-1},
\end{equation}
and
\begin{equation}
  \norm{\bvarphi_1^{\prime\prime}}_{\mch^*}\leq C,
\end{equation}
where
\begin{equation}
  \varphi_i^{n,m}= (\partial_{\bnu}\mathbf{F})_i^{n,m}- (1/2 + \xi_i^{n})(\Scal^{-1}[\mathbf{F}])_i^{n,m},
\end{equation}
with
\begin{equation}
  \partial_{\bnu}\mathbf{F} =  \sum_{i=1}^{3} \sum_{n=1}^{\infty} \sum_{m=-n}^{n} (\partial_{\bnu}\mathbf{F})_i^{n,m} \bkappa_i^{n,m},
\end{equation}
and
\begin{equation}
  \Scal^{-1}[\mathbf{F}] =  \sum_{i=1}^{3} \sum_{n=1}^{\infty} \sum_{m=-n}^{n}  (\Scal^{-1}[\mathbf{F}])_i^{n,m} \bkappa_i^{n,m}.
\end{equation}
It then follows from \eqref{eq:estimate_im} that
\begin{equation}
  \norm{\bvarphi_{\delta}}_{\mch^*}\geq C( \norm{\bvarphi_1}_{\mch^*} -\delta^{-1}\omega\norm{\Acal_{\delta}^{-1}[\mathcal{F}]}_X) \geq C |\varphi_i^{n,m}|\delta^{-1},
\end{equation}
if $\varphi_i^{n,m}\neq 0$. Thus we obtain from \eqref{eq:estimate_energy} that
\begin{equation}
   E(\bu_{\delta})=E(\widetilde{\Scal}[\bvarphi_{\delta}])\geq C |\varphi_i^{n,m}|^2\delta^{-1}.
\end{equation}

If $\epsilon_1$ and $\epsilon_2$ satisfy the conditions given in $(ii)$ of the theorem, then we can infer from Theorem \ref{cor_property_A}, $(3)$, that
\begin{equation}
  \norm{\Phi_{\delta}}_{X}\leq C \norm{\mathcal{F}}_{Y}.
\end{equation}
Therefore, we have from {\eqref{eq:solution} and \eqref{eq:estimate_energy}}
\begin{equation}
  E(\bu_{\delta})= E(\widetilde{\Scal}[\bvarphi_{\delta}])\leq C \norm{\bvarphi_{\delta}}_{\mch^*}\leq C.
\end{equation}

The proof is complete.

\end{proof}

\begin{rem}

As discussed earlier in the introduction, in the existing studies \cite{AKKY2,DLL,LiLiu2d,LiLiu3d}, the plasmon parameters are always taken to be of the following form,
\[
\widehat\lambda=\epsilon\lambda_0\quad\mbox{and}\quad \widehat\mu=\epsilon\mu_0,
\]
with $\epsilon$ being an appropriate negative number. Clearly, such plasmon parameters break both convexity conditions in \eqref{eq:convex}. For the plasmonic resonances proved in Theorem~\ref{thm_resonance}, it is straightforward to verify that for the plasmon parameters
\[
\widehat\lambda=\epsilon_1\lambda_0\quad\mbox{and}\quad \widehat\mu=\epsilon_2\mu_0,
\]
considered therein,
  \begin{enumerate}
    \item if $\epsilon_2=c_1^n$, they break the first convexity condition in \eqref{eq:convex};
    \item if $\epsilon_2=c_{2,1}^n$, they break the second convexity condition in \eqref{eq:convex};
    \item if $\epsilon_2=c_{2,2}^n$, they break the first convexity condition in \eqref{eq:convex};
    \item if $\epsilon_1=c_3^n$ with $\epsilon_2<0$, they break the first convexity condition in \eqref{eq:convex} and if $\epsilon_1=c_3^n$ with $\epsilon_2\geq0$, they break the second convexity in \eqref{eq:convex}.
  \end{enumerate}
Particularly, it can be verified that the plasmonic structures considered in \cite{AKKY2,DLL,LiLiu2d,LiLiu3d} are included as some special cases in those general ones constructed in Theorem~\ref{thm_resonance}.
  \end{rem}
\begin{rem}
From the proof in theorem \ref{thm_resonance}, it can be directly seen that if we formally take $\omega=0$ in \eqref{eq:problem_system}, then the resonance results in Theorem~\ref{thm_resonance} still hold. That is, we have rigorously verified the quasi-static approximation for the plasmonic resonances. Moreover, according to Theorem~\ref{thm_resonance}, we have provided the condition $\omega/\delta\ll 1$ that ensure the validity of the quasi-static approximation.
\end{rem}

\section{Cloaking due to anomalous localized resonance in elasto-statics}

In this section, we consider the cloaking due to anomalous localized resonance associated with the newly found plasmonic structures in the previous section.  According to the study in \cite{AKKY2,DLL,LiLiu2d,LiLiu3d}, in order to have the localizing and cloaking effects, one should incorporate a core inside the plasmonic inclusion. That is, the concerned plasmonic structure takes a core-shell-matrix form, which we shall describe in the sequel.

First of all, we assume that the region $D$ is a central ball of radius $r_e$, namely $B_{r_e}$, and inside the the ball $B_{r_e}$ there is a concentric ball $B_{r_i}$ with $r_i<r_e$. Inside $B_{r_i}$ the Lam\'e parameters are $(\breve{\lambda},\breve{\mu})$, where
\begin{equation}\label{eq:mcore}
\breve{\lambda}=\epsilon_3\lambda_0\quad\mbox{and}\quad \breve\mu=\epsilon_4\mu_0,
\end{equation}
with $\epsilon_3\in \mathbb{R}$ and  $\epsilon_4\in \mathbb{R}$. The Lam\'e parameters in the shell $B_{r_e}\backslash \overline{B_{r_i}}$ and in the matrix $\mathbb{R}^3\backslash\overline{B_{r_e}}$ are, respectively, $(\widetilde\lambda,\widetilde\mu)$ and $(\lambda_0,\mu_0)$, as defined in \eqref{eq:homo1} and \eqref{eq:parame_inside}. In what follows, we let $\breve{\mathbf{C}}$ signify the elastic tensor where the Lam\'e parameters are specified by $(\breve{\lambda},\breve{\mu})$ and it is same for $\widetilde{\mathbf{C}}$ and $\mathbf{C}_0$, respectively, corresponding to $(\widetilde\lambda,\widetilde\mu)$ and $(\lambda_0,\mu_0)$. Next we introduce another elastic tensor $\mathbf{C}_B$ as follows,
\begin{equation}\label{eq:total_tensor}
  \mathbf{C}_B := \breve{\mathbf{C}} {\chi_{B_{r_i}}} + \widetilde{\mathbf{C}} {\chi_{B_{r_e}\backslash \overline{B_{r_i}}}} + \mathbf{C}_0 {\chi_{\mathbb{R}^3\backslash \overline{B_{r_e}}}}.
\end{equation}
We shall show that by properly choosing $\epsilon_l$, $l=1,2,3,4$, and a suitable forcing source from a certain class, then cloaking can occur due to anomalous localized resonance. As can be seen that the core-shell-matrix structure involves four Lam\'e parameters to be determined. The corresponding analysis becomes much more complicated and delicate than that in the previous section. In what follows, we shall only consider our cloaking study in the quasi-static regime; that is, we simply take the frequency to be zero. Nevertheless, similar to our study in Section 3 on the plasmonic resonances, it is unobjectionable to claim that the quasi-static approximation is justifiable; that is, the result obtained should also hold for diametrically small plasmonic devices with finite frequencies.

Associated with the elastic configuration $\mathbf{C}_B$, the elasto-statics is described by the following Lam\'e system for $\mathbf{u}_\delta\in H_{loc}^1(\mathbb{R}^3)^3$,
\begin{equation}\label{eq:cloaking_prob_origin}
\left\{
  \begin{array}{ll}
    \nabla \cdot \mathbf{C}_B \hat{\nabla}\bu_{\delta}=\mathbf{f}  & \mbox{in} \quad \mathbb{R}^3,\medskip \\
    \bu_{\delta}(\bx)=\mathcal{O}(|\bx|^{-1}) & \mbox{as} \quad |\bx|\rightarrow\infty,
  \end{array}
\right.
\end{equation}
where $\bff\in H^{-1}(\mathbb{R}^2)^3$ is a source function compactly supported in $\mathbb{R}^3\backslash  \overline{B_{r_e}}$ and satisfies
\begin{equation}\label{eq:ds1}
  \int_{\mathbb{R}^3}\mathbf{f}(\bx)\ d\bx=0.
\end{equation}
It is remarked that \eqref{eq:ds1} should be understood in the distributional sense. For technical reasons again, we shall only consider a special, still very general, class of sources as described in the sequel.

First, we introduce the following Newtonian potential $\mathbf{F}$ of the source $\mathbf{f}$ by
\begin{equation}\label{eq:cloaking_poten_F}
  \mathbf{F}(\bx):= \int_{\mathbb{R}^3\backslash \overline{B_{r_e}}} \mathbf{\Gamma}^0(\bx-\by)\bff(\by)d\by, \quad \bx\in \mathbb{R}^3,
\end{equation}
where $\mathbf{\Gamma}^0$ is given in \eqref{eq:fundamentalsolution_0}. Since the source $\mathbf{f}$ is compactly supported in $\mathbb{R}^3\backslash \overline{B_{r_e}}$, there exist $\tau>0$ such that
\begin{equation}\label{eq:cloaking_poten_F_rela}
 \mathcal{L}_{\lambda_0, \mu_0}\mathbf{F} = 0, \quad \bx\in B_{r_e+\tau}\backslash B_{r_e}.
\end{equation}
Using the fact that the eigenfunctions $\bkappa_i^{n,m}$, $i=1,2,3$ given in \eqref{eq:eigenfunctions} form an orthogonal basis on $L^2(\mathbb{S}^2)^3$, the potential $\mathbf{F}$ can be written as
\begin{equation}
  \mathbf{F}(\bx)=\sum_{i=1}^{3} \sum_{n=1}^{\infty}\sum_{m=-n}^{n}\left( f_i^{n,m}  \Scal_{B_{r_e+\tau}}[\bkappa_i^{n,m}](\bx)\right)+c, \quad \bx\in B_{r_e+\tau}\backslash B_{r_e},
\end{equation}
where $c$ is a constant. In what follows, we shall assume that $f_i^{n,m}= 0$ when $i=2, 3$; that is, the Newtonian potential of the source $\mathbf{f}$ only contains the terms involving $\Scal_{B_{r_e+\tau}}[\bkappa_1^{n,m}]$, and doesn't contain any terms involving $\Scal_{B_{r_e+\tau}}[\bkappa_i^{n,m}]$, $i=2,3$. For any fixed $r_1\in\mathbb{R}_+$ and eigenfunction $\bkappa_1^{n,m}$, straightforward calculations yield that
\begin{equation}\label{eq:solution_single_layer}
  \Scal_{B_{r_1}}[\bkappa_1^{n,m}]=
\left\{
  \begin{array}{ll}
    e_1^n r^n/r_1^{n-1}\bkappa_1^{n,m} , & |\bx|\leq r_1,\medskip \\
    e_1^n r_1^{n+2}/r^{n+1}\bkappa_1^{n,m} , & |\bx|> r_1.
  \end{array}
\right.
\end{equation}
 Therefore, by the above assumption on the source term, we actually have that the Newtonian potential of the source $\mathbf{f}$ has the following representation
\begin{equation}\label{eq:source_expansion}
  \mathbf{F}= \sum^{\infty}_{n=1}\sum_{m=-n}^{n} f^{n,m} \frac{r^n}{r_e^{n}} \bkappa_1^{n,m} + c, \quad \bx\in B_{r_e+\tau}\backslash B_{r_e},
\end{equation}
which contains only the first kind eigenfunctions $\bkappa_1^{n,m}$. In \eqref{eq:source_expansion}, $\left( \left( f^{n,m} \sqrt{n} \right)_{m=-n}^{n} \right)_{n=1}^{\infty}\in l^2$.
For the subsequent use, we have by direct calculations that the traction of $\mathbf{F}$ on the boundary $\partial B_{r_e}$ is given by
\begin{equation}\label{eq:source_traction_expansion}
  \partial_{\bnu} \mathbf{F}=\sum^{\infty}_{n=1}\sum_{m=-n}^{n} f^{n,m} \frac{\mu_0(n-1)}{r_e} \bkappa_1^{n,m} \quad \bx\in\partial B_{r_e}.
\end{equation}

Similar to \eqref{eq:def0}, we introduce the following quantity signifying the energy dissipation of the elasto-static system \eqref{eq:cloaking_prob_origin},
\begin{equation}
  E(\bu_{\delta}):=\Im\int_{D} \widehat{\nabla}\bu_{\delta}: \widetilde{\mathbf{C}}\overline{\widehat{\nabla}\bu_{\delta}(\bx)}d\bx.
\end{equation}
We say that cloaking due anomalous localized resonance occurs if the following conditions are satisfied
\begin{equation}\label{eq:cloaking_condition}
  \left\{
    \begin{array}{l}
      \displaystyle{\limsup_{\delta\rightarrow +0} E(\bu_{\delta})\rightarrow +\infty},  \medskip \\
      |\bu_{\delta}(x)|\leq C, \ \ \ |x|>r_0,
    \end{array}
  \right.
\end{equation}
where $\mathbf{u}_\delta\in H^1_{loc}(\mathbb{R}^3)^3$ is the solution to \eqref{eq:cloaking_prob_origin}, and $C, r_0$ are two finite positive constants. By \eqref{eq:cloaking_condition}, one sees that the highly oscillatory behaviour of the resonant field is confined within the region $B_{r_0}$, and hence localizing effect is observed. Moreover, in the physical situation, by scaling the elasto-static system with a factor being $1/\sqrt{E(\mathbf{u}_\delta)}$, one readily verifies that the scaled filed outside $B_{r_0}$ vanishes, and hence the scaled source as well as the plasmonic structure are invisible to the measurement made in $\mathbb{R}^3\backslash\overline{B_{r_0}}$.

Henceforth, we let $\mathcal{L}_{\breve{\lambda}, \breve{\mu}}$, $\partial_{\breve{\bnu}}$, $\breve{\Scal}_{B_{r_e}}$ and $\breve{\Kcal}_{B_{r_e}}^*$, respectively, denote the Lam\'e operator, the associated conormal derivative, the single layer potential operator and the N-P operator corresponding to the Lam\'e parameters $(\breve{\lambda}, \breve{\mu})$. The same notations shall be adopted for the Lam\'e parameters $(\widetilde{\lambda}, \widetilde{\mu})$ . Then it can be readily seen that the elasto-static system \eqref{eq:cloaking_prob_origin} can be equivalently formulated as the following transmission problem,
\begin{equation}\label{eq:cloaking_prob_system}
  \left\{
    \begin{array}{ll}
      \mathcal{L}_{\breve{\lambda},\breve{\mu}}\bu_{\delta}(\bx) =0 , & \mbox{in} \ \ B_{r_i} , \medskip  \\
      \mathcal{L}_{\widetilde\lambda,\widetilde\mu}\bu_{\delta}(\bx) =0 , & \mbox{in} \ \ B_{r_e}\backslash \overline{B_{r_i}} ,\medskip   \\
      \mathcal{L}_{\lambda_0, \mu_0}\bu_{\delta}(\bx) =\bff, &  \mbox{in} \ \ \mathbb{R}^3\backslash \overline{B_{r_e}},\medskip \\
      \bu_{\delta}|_- = \bu_{\delta}|_+, \quad \partial_{\breve{\bnu}}\bu_{\delta}|_- = \partial_{\widetilde{\bnu}}\bu_{\delta}|_+   & \mbox{on} \ \ \partial B_{r_i} ,\medskip\\
      \bu_{\delta}|_- = \bu_{\delta}|_+, \quad  \partial_{\widetilde{\bnu}}\bu_{\delta}|_- = \partial_{\bnu}\bu_{\delta}|_+ & \mbox{on} \; \partial B_{r_e}.
    \end{array}
  \right.
\end{equation}
The solution to the PDE system \eqref{eq:cloaking_prob_system} can be represented by using the following integral ansatz,
\begin{equation}\label{eq:solution_cloaking}
  \bu_{\delta}(\bx)=
  \left\{
    \begin{array}{ll}
      \breve{\Scal}_{B_{r_i}}[\bupsilon](\bx), & \bx\in B_{r_i},\medskip \\
      \widetilde{\Scal}_{B_{r_i}}[\bphi](\bx) + \widetilde{\Scal}_{B_{r_e}}[\bvarphi](\bx), & \bx\in B_{r_e}\backslash \overline{B_{r_i}},\medskip \\
      \Scal_{B_{r_e}}[\bpsi](\bx) + \mathbf{F}(\bx), & \bx\in \mathbb{R}^3\backslash \overline{B_{r_e}},
    \end{array}
  \right.
\end{equation}
where $\mathbf{F}$ is given in \eqref{eq:cloaking_poten_F}. It is obvious that the solution given in \eqref{eq:solution_cloaking} satisfies the first three equalities in \eqref{eq:cloaking_prob_system}. Furthermore, by matching the boundary transmission conditions in \eqref{eq:cloaking_prob_system}, we have
\begin{equation}\label{eq:cloaking_divided}
  \left\{
    \begin{array}{ll}
      \breve{\Scal}_{B_{r_i}}[\bupsilon]=\widetilde{\Scal}_{B_{r_i}}[\bphi] + \widetilde{\Scal}_{B_{r_e}}[\bvarphi], & \mbox{on} \quad \partial B_{r_i},\medskip \\
       \partial_{\breve{\bnu}}\breve{\Scal}_{B_{r_i}}[\bupsilon]|_- = \partial_{\widetilde{\bnu}}(\widetilde{\Scal}_{B_{r_i}}[\bphi] + \widetilde{\Scal}_{B_{r_e}}[\bvarphi])|_+ , & \mbox{on} \quad \partial B_{r_i}, \medskip \\
      \widetilde{\Scal}_{B_{r_i}}[\bphi] + \widetilde{\Scal}_{B_{r_e}}[\bvarphi]= \Scal_{B_{r_e}}[\bpsi] + \mathbf{F}, & \mbox{on} \quad \partial B_{r_e}, \medskip \\
      \partial_{\widetilde{\bnu}}(\widetilde{\Scal}_{B_{r_i}}[\bphi] + \widetilde{\Scal}_{B_{r_e}}[\bvarphi])|_- = \partial_{\bnu}(\Scal_{B_{r_e}}[\bpsi] + \mathbf{F})|_+ , & \mbox{on} \quad \partial B_{r_e}.
    \end{array}
  \right.
\end{equation}
With the help of the jump property in \eqref{eq:jump_single}, \eqref{eq:cloaking_divided} further yields the following integral system,
\begin{equation}\label{eq:cloaking_matrix}
  \left[
    \begin{array}{cccc}
       \breve{\Scal}_{B_{r_i}} & -\widetilde{\Scal}_{B_{r_i}} & -\widetilde{\Scal}_{B_{r_e}} & 0 \\
      -\frac{1}{2} + \breve{\Kcal}_{B_{r_i}}^*  & -\frac{1}{2} - \widetilde{\Kcal}_{B_{r_i}}^* &\partial_{\widetilde{\bnu}_i} \widetilde{\Scal}_{B_{r_e}} & 0 \\
      0 & \widetilde{\Scal}_{B_{r_i}} & \widetilde{\Scal}_{B_{r_e}} & -\Scal_{B_{r_e}} \\
      0 & \partial_{\bnu_e}\Scal_{B_{r_i}} & -\frac{1}{2} + \widetilde{\Kcal}_{B_{r_e}}^* & -\frac{1}{2} - \Kcal_{B_{r_e}}^* \\
    \end{array}
  \right]
\left[
  \begin{array}{c}
    \bupsilon \\
    \bphi \\
    \bvarphi \\
    \bpsi \\
  \end{array}
\right]=
\left[
  \begin{array}{c}
    0 \\
    0 \\
    \mathbf{F} \\
    \partial_{\bnu}\mathbf{F} \\
  \end{array}
\right],
\end{equation}
where $\partial_{\widetilde{\bnu}_i}$ and $ \partial_{\bnu_e}$ signify taking the conormal derivatives on the boundaries of $B_{r_i}$ and $B_{r_e}$, respectively.
Substituting \eqref{eq:solution_single_layer}, \eqref{eq:source_expansion} and \eqref{eq:source_traction_expansion} into \eqref{eq:cloaking_matrix}, along with the use of Theorem~\ref{thm:eigenfunction}, one can conclude that the solutions to \eqref{eq:cloaking_matrix} have the following Fourier series expansions,
\begin{equation}
  \bupsilon=\sum^{\infty}_{n=2}\sum_{m=-n}^{n} \frac{\upsilon^{n,m}}{d^{n}} \bkappa_1^{n,m} + \bupsilon_1,\qquad
  \bphi=\sum^{\infty}_{n=2}\sum_{m=-n}^{n} \frac{\phi^{n,m}}{d^{n}} \bkappa_1^{n,m},
\end{equation}
and 
\begin{equation}
  \bvarphi=\sum^{\infty}_{n=2}\sum_{m=-n}^{n} \frac{\varphi^{n,m}}{d^{n}} \bkappa_1^{n,m}+\bvarphi_1,\qquad
  \bpsi=\sum^{\infty}_{n=2}\sum_{m=-n}^{n} \frac{\psi^{n,m}}{d^{n}} \bkappa_1^{n,m},
\end{equation}
where
\begin{equation}
  \bupsilon_1=\sum_{m=-1}^{1} \frac{-3 f^{1,m} \breve{\mu}}{r_e} \bkappa_1^{1,m},\qquad
  \bvarphi_1=\sum_{m=-1}^{1} \frac{-3 f^{1,m} \widetilde{\mu}}{r_e} \bkappa_1^{1,m},
\end{equation}
\begin{equation}
  \upsilon^{n,m}=-f^{n,m} \mu_0 \breve{\mu} \widetilde{\mu}(2n+1)^3\rho^{n-1},
\end{equation}
\begin{equation}
\phi^{n,m}= f^{n,m} \mu_0 (\breve{\mu} - \widetilde{\mu})\widetilde{\mu}(n-1)(2n+1)^2 \rho^{n-1},
\end{equation}
\begin{equation}
  \varphi^{n,m}= -f^{n,m} \mu_0 \widetilde{\mu}(2n+1)^2((n-1)\breve{\mu} +(n+2)\widetilde{\mu}),
\end{equation}
and
\begin{equation}\label{eq:dn1}
\begin{split}
  \psi^{n,m}=&f^{n,m}(n-1)(2n+1)( -\mu_0(\mu_0-\widetilde{\mu})( (n-1)\breve{\mu} +(n+2)\widetilde{\mu} )\\
  & + \mu_0(\breve{\mu}-\widetilde{\mu})( (n-1)\mu_0 +(n+2)\widetilde{\mu}\rho^2 ) ),\\
  d^{n}=&((n-1)\breve{\mu} +(n+2)\mu_0)( (n-1)\breve{\mu} +(n+2)\widetilde{\mu} ) \\
  &+ (n+2)(n-1)\rho^{2n+1}(\breve{\mu}-\widetilde{\mu})(-\mu_0 + \widetilde{\mu}). 
\end{split}
\end{equation}
Here and also throughout the rest of the paper, $\rho:=r_i/r_e$. Then it follows from \eqref{eq:solution_single_layer} that the solution in the shell $B_{r_e}\backslash \overline{B_{r_i}}$ can be written as
\begin{equation}
   \widetilde{\Scal}_{B_{r_i}}[\bphi] + \widetilde{\Scal}_{B_{r_e}}[\bvarphi]=\sum^{\infty}_{n=2}\sum_{m=-n}^{n} e_1^n \left( \frac{\phi^{n,m}}{d^{n}} \frac{r_i^{n+2}}{r^{n+1}} + \frac{\varphi^{n,m}}{d^{n}}\frac{r^n}{r_e^{n-1}}\right)\bkappa_1^{n,m}+r\bvarphi_1, \quad \bx\in B_{r_e}\backslash \overline{B_{r_i}},
\end{equation}
whereas the solution in $\mathbb{R}^3\backslash \overline{B_{r_e}}$ is given by
\begin{equation}\label{eq:solution_outside}
 \mathbf{F}+ \widetilde{\Scal}_{B_{r_e}}[\bpsi]=\mathbf{F} + \sum^{\infty}_{n=2}\sum_{m=-n}^{n} e_1^n \left( \frac{\psi^{n,m}}{d^{n}}\frac{r_e^{n+2}}{r^{n+1}} \right)\bkappa_1^{n,m}, \quad \bx\in B_{r_e}\backslash \overline{B_{r_i}}.
\end{equation}
We set
\begin{equation}\label{eq:cloaking_parameter}
  \epsilon_2=-\frac{n_0+2}{n_0-1} \quad \mbox{and} \quad \epsilon_4= \left(\frac{n_0+2}{n_0-1}\right)^2,
\end{equation}
with $n_0>1$ properly chosen in what follows (cf. \eqref{eq:ss1}). Here, we emphasize that the parameters $\epsilon_l$, $l=1,2,3,4$, completely determine the plasmonic device according to \eqref{eq:total_tensor}. Indeed, in our subsequent study, the parameters $\epsilon_1$ and $\epsilon_3$ can be any fixed finite real numbers and hence the choice of the parameters in \eqref{eq:cloaking_parameter} fixes the cloaking device for our study. We note that choice of $\epsilon_2$ in \eqref{eq:cloaking_parameter} actually comes from \eqref{eq:choice1} in Theorem~\ref{cor_property_A}. However, we would like to remark that the parameter $n_0$ is chosen to be dependent on $\delta$ (see \eqref{eq:ss1} again), and this is critical in our study. Particularly, it guarantees that the following estimates hold,
\begin{equation}\label{eq:estimate_deno_n_0}
  |d^{n_0}|\approx n_0^2\left(\delta^2+\rho^{2n_0}\right),
\end{equation}
and
\begin{equation}\label{eq:estimate_deno_no_n_0}
  |d^n|\approx \left(\frac{n-n_0}{n_0}\right)^2 \quad \mbox{when} \quad n\neq n_0,
\end{equation}
where $d^n$ is defined in \eqref{eq:dn1}.

Next we can have some preliminary estimate of the energy $E(\bu_{\delta})$. First of all, we divide the solution in the shell  $B_{r_e}\backslash \overline{B_{r_i}}$ into two parts, namely
\begin{equation}
  \widetilde{\Scal}_{B_{r_i}}[\bphi] + \widetilde{\Scal}_{B_{r_e}}[\bvarphi] = H_{n_0} + H_{\widetilde{n_0}},
\end{equation}
where
\[
H_{n_0} =\sum_{m=-n_0}^{n_0} e_1^{n_0} \left( \frac{\phi^{n_0,m}}{d^{n_0}} \frac{r_i^{n_0+2}}{r^{n_0+1}} + \frac{\varphi^{n_0,m}}{d^{n_0}}\frac{r^{n_0}}{r_e^{n_0-1}}\right)\bkappa_1^{n_0,m},
\]
and
\[
 H_{\widetilde{n_0}} = \sum^{\infty}_{n=2,n\neq n_0}\sum_{m=-n}^{n} e_1^n \left( \frac{\phi^{n,m}}{d^{n}} \frac{r_i^{n+2}}{r^{n+1}} + \frac{\varphi^{n,m}}{d^{n}}\frac{r^n}{r_e^{n-1}}\right)\bkappa_1^{n,m}+ r\bvarphi_1.
\]
Then the energy $E(\bu_{\delta})$ can be represented as
\begin{equation}
  \begin{split}
    E(\bu_{\delta}) & =\Im\int_{D} \widehat{\nabla}\bu_{\delta}: \widetilde{\mathbf{C}}\overline{\widehat{\nabla}\bu_{\delta}}d\bx \\
      & =\Im\int_{D} \widehat{\nabla}H_{n_0}: \widetilde{\mathbf{C}}\overline{\widehat{\nabla}H_{n_0}}d\bx + \Im\int_{D} \widehat{\nabla}H_{\widetilde{n_0}}: \widetilde{\mathbf{C}}\overline{\widehat{\nabla}H_{\widetilde{n_0}}}d\bx.
  \end{split}
\end{equation}
Straightforward calculations give
\begin{equation}
  \Im\int_{D} \widehat{\nabla}H_{n_0}: \widetilde{\mathbf{C}}\overline{\widehat{\nabla}H_{n_0}}d\bx \approx \sum_{m=-n_0}^{n_0} \frac{|f^{n_0,m}|^2n_0\delta}{\delta^2 +\rho^{2n_0}},
\end{equation}
and
\begin{equation}
  \Im\int_{D} \widehat{\nabla}H_{\widetilde{n_0}}: \widetilde{\mathbf{C}}\overline{\widehat{\nabla}H_{\widetilde{n_0}}}d\bx \approx \sum^{\infty}_{n=2,n\neq n_0}\sum_{m=-n}^{n} \frac{\delta |f^{n,m}|^2 n^4 n_0 }{(n-n_0)^3}\leq C\delta,
\end{equation}
since $\left( f^{n,m} \sqrt{n} \right)_{n=1, m=-n,\ldots,n}^{\infty}\in l^2$. Then one can conclude that
\begin{equation}\label{eq:cloaking_energy_1}
  E(\bu_{\delta})\approx \sum_{m=-n_0}^{n_0} \frac{|f^{n_0,m}|^2n_0\delta}{\delta^2 +\rho^{2n_0}}.
\end{equation}

We are in a position to present the main theorem of the current section.

\begin{thm}
  Let the Lam\'e tensor $\mathbf{C}_B$ be described in \eqref{eq:total_tensor} with $\epsilon_2$ and $\epsilon_4$ given in \eqref{eq:cloaking_parameter}. Assume the source $\mathbf{f}$ is supported in $B_{r^*}\backslash \overline{B_{r_i}}$ with $r^*=\sqrt{r_e^3/r_i}$ and its Newtonian potential $\mathbf{F}$ is given in \eqref{eq:source_expansion} with $\left( f^{n,m}\sqrt{n} \right)_{n=1, m=-n,\ldots,n}^{\infty}\in l^2$. Then the cloaking due to anomalous localized resonance occurs, namely, the two conditions in \eqref{eq:cloaking_condition} are fulfilled. If the source $\mathbf{f}$ is supported outside $B_{r^*}$, then the anomalous localized resonance does not occur.
\end{thm}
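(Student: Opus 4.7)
The argument hinges on the energy estimate \eqref{eq:cloaking_energy_1}, together with the freedom to choose the device parameter $n_0$ in \eqref{eq:cloaking_parameter} as a function of $\delta$. The natural balancing $\rho^{n_0}\approx \delta$ forces $n_0\approx \log(1/\delta)/\log(1/\rho)\to\infty$ and reduces the denominator $\delta^2+\rho^{2n_0}$ to size $\delta^2$; after this, the qualitative behaviour of $E(\bu_\delta)$ is dictated entirely by the comparison of $|f^{n_0,m}|^2$ with $\rho^{n_0}$. It is precisely the critical radius $r^*=\sqrt{r_e^3/r_i}$, which satisfies $(r_e/r^*)^2=\rho$, that separates the two regimes through the growth of the interior Fourier coefficients of the Newtonian potential $\mathbf{F}$.

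\smallskip
\emph{Resonance for sources in $B_{r^*}\setminus \overline{B_{r_i}}$.} Since $\mathbf{F}$ is Lam\'e-harmonic off $\mathrm{supp}(\mathbf{f})$, the interior expansion \eqref{eq:source_expansion} has radius of convergence $R_{\mathrm{in}}=\mathrm{dist}(0,\mathrm{supp}(\mathbf{f}))$. The support hypothesis forces $R_{\mathrm{in}}<r^*$, so the root test yields
\[
\limsup_{n\to\infty}\bigl(\max_m|f^{n,m}|\bigr)^{1/n}\geq \frac{r_e}{R_{\mathrm{in}}}>\sqrt{\rho}.
\]
Hence there exist $n_k\to\infty$, indices $m_k$, and a constant $c>0$ with $|f^{n_k,m_k}|^2\geq c\,\rho^{n_k}$. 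Setting $n_0:=n_k$ in \eqref{eq:cloaking_parameter} and $\delta_k\approx\rho^{n_k}$, substitution into \eqref{eq:cloaking_energy_1} yields $E(\bu_{\delta_k})\gtrsim c\,n_k\to\infty$, proving the first condition in \eqref{eq:cloaking_condition}. For the localization condition, my plan is to analyse \eqref{eq:solution_outside}: $\mathbf{F}$ is bounded away from $\mathrm{supp}(\mathbf{f})$, and one has to control $\sum_{n,m}(e_1^n\psi^{n,m}/d^n)(r_e^{n+2}/r^{n+1})\bkappa_1^{n,m}$. Combining \eqref{eq:estimate_deno_n_0}, the polynomial-in-$n$ bound on $|\psi^{n,m}|$ in terms of $|f^{n,m}|$ extracted from \eqref{eq:dn1}, the critical sizes $|f^{n_0,m}|^2\sim \rho^{n_0}$ and $\delta\approx \rho^{n_0}$, and $|e_1^{n_0}|\sim n_0^{-1}$, the resonant $n=n_0$ summand is bounded precisely when $(r_e/r)/\rho^{3/2}\leq 1$, i.e.\ $r\geq r_e^{5/2}/r_i^{3/2}=:r_0$. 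The non-resonant tail is controlled by \eqref{eq:estimate_deno_no_n_0} together with the geometric bound $|f^{n,m}|\lesssim (r_e/R_{\mathrm{in}})^n$, and converges in the same region $|\bx|>r_0$.

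\smallskip
\emph{No resonance for sources outside $B_{r^*}$.} Here $\mathbf{F}$ extends harmonically across $B_{r^*}$ and, by the separation of the support from $\partial B_{r^*}$, across $B_{r^*+\tau}$ for some $\tau>0$. This gives the geometric decay $|f^{n,m}|^2\lesssim (r_e/(r^*+\tau))^{2n}$. Substituting into \eqref{eq:cloaking_energy_1} and using $\delta^2+\rho^{2n_0}\geq 2\delta\rho^{n_0}$,
\[
E(\bu_\delta)\lesssim \frac{n_0\,(r_e/(r^*+\tau))^{2n_0}}{\rho^{n_0}}=n_0\Bigl(\frac{r^*}{r^*+\tau}\Bigr)^{2n_0}\longrightarrow 0
\]
as $\delta\to 0$, so $E(\bu_\delta)$ stays bounded and anomalous localized resonance cannot occur.

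\smallskip
\emph{Main obstacle.} The delicate step is the uniform-in-$\delta$ localization bound in the resonance part. At the critical index, $d^{n_0}$ is small and $|f^{n_0,m}|$ can be as large as $\rho^{n_0/2}$, so the decay $(r_e/r)^{n_0+1}$ only compensates when $r$ exceeds the explicit threshold $r_0=r_e^{5/2}/r_i^{3/2}$. Simultaneously dominating the non-resonant tail in the same region, particularly for indices $n$ close to $n_0$ where the denominator estimate \eqref{eq:estimate_deno_no_n_0} itself degrades, constitutes the technical core of the argument.
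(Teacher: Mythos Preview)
Your plan mirrors the paper's proof closely: select $n_0$ via $\rho^{n_0}\approx\delta$ so that \eqref{eq:cloaking_energy_1} blows up precisely when the root test on $\{f^{n,m}\}$ places the source inside $B_{r^*}$, and read off the exterior boundedness from the Fourier representation \eqref{eq:solution_outside} by splitting into the resonant index $n=n_0$ and the rest. The non-resonance argument and the resonance lower bound are essentially identical to the paper's.

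There is one slip in the localization step. You feed ``$|f^{n_0,m}|^2\sim\rho^{n_0}$'' into the exterior estimate, but the root test only supplies this as a \emph{lower} bound along a subsequence; bounding $|\bu_\delta|$ from above requires an \emph{upper} bound on $|f^{n_0,m}|$, and none better than the $l^2$ hypothesis (giving $|f^{n,m}|=o(n^{-1/2})$) is available in general. The paper therefore argues differently: it establishes the uniform crude bound $|\psi^{n,m}/d^n|\lesssim n^2\rho^{-n}|f^{n,m}|$ (checked at $n=n_0$ via \eqref{eq:estimate_deno_n_0} with $\delta\approx\rho^{n_0}$, and at $n\neq n_0$ by absorbing the factor $n_0/|n-n_0|$ into the exponentially large $\rho^{-n}$), which combined with $|e_1^n|\sim n^{-1}$ and the $l^2$ decay yields boundedness for $|\bx|>r_e^2/r_i$. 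Incidentally, even under your own assumption $|f^{n_0,m}|\sim\rho^{n_0/2}$ the arithmetic gives $((r_e/r)\rho^{-1/2})^{n_0}$ and hence $r_0=r^*$, not $(r_e/r)\rho^{-3/2}\leq 1$; so the exponent $3/2$ is a computational error as well. With the crude uniform bound in place your identification of the ``near $n_0$'' difficulty is exactly what the paper resolves, and the argument goes through.
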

\begin{proof}
First, we prove the non-resonance result. If the source $\mathbf{f}$ is supported outside $B_{r^*}$, then the potential $\mathbf{F}$ given in \eqref{eq:source_expansion} converges on $|\bx|=r^*+ \alpha$ with $r^*=\sqrt{r_e^3/r_i}$ and $\alpha>0$ small enough, namely
\begin{equation}
  \sum^{\infty}_{n=2}\sum_{m=-n}^{n}  \frac{r_e^n}{r_i^n}n|f^{n,m}|^2<C,
\end{equation}
with $C$ a certain positive constant.
From the expression in \eqref{eq:cloaking_energy_1}, one then has
\begin{equation}
  E(\bu_{\delta})\approx \sum_{m=-n_0}^{n_0} \frac{|f^{n_0,m}|^2n_0}{\delta +\frac{\rho^{2n_0}}{\delta}}\leq Cn_0 \sum_{m=-n_0}^{n_0} \frac{r_e^{n_0}}{r_i^{n_0}} |f^{n_0,m}|^2\leq C,
\end{equation}
which means that the anomalous localized resonance does not occur.

Next we prove the result of cloaking due to anomalous localized resonance. Suppose that the source $\mathbf{f}$ is supported inside $B_{r^*}$, then there holds
\[
\limsup_{n\rightarrow \infty}\left(\sum_{m=-n}^{m=n} \frac{|f^{n,m}| }{ r_e^{n}} \right)^{1/n} > 1/\sqrt{\frac{r_e^3}{r_i}},
\]
namely
\begin{equation}\label{eq:source_infity}
 \limsup_{n\rightarrow \infty}\left(\sum_{m=-n}^{m=n} |f^{n,m}|  \right)^2 >  \frac{r_i^n}{r_e^n}.
\end{equation}
Let $n_0\in\mathbb{N}$ be chosen such that
\begin{equation}\label{eq:ss1}
  (r_i/r_e)^{n_0}< \delta \leq (r_i/r_e)^{n_0-1}.
\end{equation}
It is remarked that for sufficiently small $\delta$, one has that $n_0>1$. Then by using \eqref{eq:cloaking_energy_1}, the following estimate of energy $E(\bu_{\delta})$ holds
\begin{equation}\label{eq:cloaking_energy_2}
  E(\bu_{\delta})\approx \frac{r_e^{n_0}}{r_i^{n_0}} \sum_{m=-n_0}^{n_0} |f^{n_0,m}|^2n_0 \geq C\frac{r_e^{n_0}}{r_i^{n_0}} \frac{n_0}{(2n_0+1)} \left( \sum_{m=-n_0}^{n_0} |f^{n_0,m}| \right)^2.
\end{equation}
Finally, from \eqref{eq:source_infity} and \eqref{eq:cloaking_energy_2}, one can obtain that
\begin{equation}
 \sup E(\bu_{\delta})\rightarrow +\infty \quad \mbox{as} \quad \delta\rightarrow +0,
\end{equation}
which means that the first condition of \eqref{eq:cloaking_condition} is satisfied. As for the second one, direct computation yields that
\begin{equation}
  |\psi^{n_0,m}|\approx \delta |f^{n_0,m}| n_0^3.
\end{equation}
Thus, with the help of \eqref{eq:estimate_deno_n_0}, one can conclude that
\begin{equation}
  |\frac{\psi^{n_0,m}}{d^{n_0}}| \approx \frac{|f^{n_0,m}| n_0}{\delta +\frac{\rho^{2n_0}}{\delta}}\leq C \frac{r_e^{n_0}}{r_i^{n_0}}|f^{n_0,m}| n_0.
\end{equation}
For $n\neq n_0$, one has that
\begin{equation}
   |\psi^{n,m}|\approx \frac{n^2|n-n_0| |f^{n,m}|}{n_0},
\end{equation}
which together with \eqref{eq:estimate_deno_no_n_0} implies that for $n\neq n_0$
\begin{equation}
  |\frac{\psi^{n,m}}{d^{n}}| \approx  \frac{n^2 n_0 |f^{n,m}|}{|n-n_0|}.
\end{equation}
Finally, from the expression of the solution outside $B_{r_e}$ in \eqref{eq:solution_outside}, one can obtain that if $|\bx|>r_e^2/r_i$
\begin{equation}
  |\bu_{\delta}|\leq |\mathbf{F}| + C\sum^{\infty}_{n=2}\sum_{m=-n}^{n} n \frac{r_e^{2n}}{r_i^n} \frac{|f^{n,m}|}{r^n}\leq C,
\end{equation}
which is the second condition of \eqref{eq:cloaking_condition}. That is, cloaking due to anomalous localized resonance occurs.

The proof is complete.
\end{proof}

\begin{rem}
  In order to make the phenomenon of cloaking due to anomalous localized resonance occur, we only need to take $\epsilon_2$ and $\epsilon_4$ to be the ones given in \eqref{eq:cloaking_parameter}; that is, only the Lame parameters $\breve{\mu}$ and $\widetilde{\mu}$ have a certain specific form, whereas the Lam\'e parameters $\breve{\lambda}$ and $\widetilde{\lambda}$ are free to choose.
\end{rem}

\section{Concluding remarks}

In this paper, we consider the plasmonic resonance and cloaking due to anomalous localized resonance in linear elasticity. The crucial ingredient is allowing the presence of negative elastic materials, which breaks the strong convexity conditions satisfied by the Lam\'e parameters. The resonant field reveals highly oscillatory behaviour that is manifested by the blowup of the energy dissipation in the underlying elastic system as the loss parameter goes to zero. The choice of the negative Lam\'e parameters for the plasmonic structures is critical for the occurrence of the resonances. Within the spherical geometric setup, we derive all the possible plasmonic structures that can induce resonances. These include the existing ones studied in the literature as a special case in the current article. Furthermore, we prove the plasmonic resonances for those newly found structures within the finite-frequency regime. Prior to our result, the plasmonic resonances have been investigated only for elasto-statics. Hence, our results validate the quasi-static approximation of the plasmonic resonance for diametrically small elastic inclusions. Finally, as an application of the newly found structures, we construct a plasmonic device of the core-shell-matrix form that can induce cloaking due to anomalous localized resonance in the quasi-static regime. This also includes the existing study in the literature as a particularly case.

In our study, we have made a combination use of layer potential technique, spectral analysis, asymptotic analysis as well as Fourier techniques. In principle, the framework developed can be used to derive the plasmonic resonances and cloaking for the elliptic geometry using the elliptical coordinate system. However, one would encounter much more technical and delicate analysis. The newly found plasmonic structures may find interesting applications in other areas such as imaging resolution enhancement. We shall investigate these and other interesting issues in our future work.

\section*{Acknowledgement}

The work was supported by the startup fund and the FRG grants from Hong Kong Baptist University, Hong Kong RGC General Research Fund, No. 12302415, and the NSF grant of China, No. 11371115.

\end{document}